\theoremstyle{definition}
\newtheorem{theorem}{Theorem}[section]
\newtheorem{theoremx}{Theorem}
\numberwithin{equation}{section}
\newtheorem{question}[theorem]{Question}
\newtheorem{corollary}[theorem]{Corollary}
\newtheorem{lemma}[theorem]{Lemma}
\newtheorem{proposition}[theorem]{Proposition}
\theoremstyle{definition}
\newtheorem{example}[theorem]{Example}
\newtheorem{discussion}[theorem]{Discussion}
\newtheorem{conjecture}[theorem]{Conjecture}
\newtheorem{remark}[theorem]{Remark}
\newtheoremstyle{TheoremNum}
        {8pt}{8pt}              
        {\upshape}                      
        {}                              
        {\bfseries}                     
        {.}                             
        {.5em}                             
        {\theoremname{#1}\theoremnote{ \bfseries #3}}
  \theoremstyle{TheoremNum}
\newcommand{\m}{\mathfrak{m}}
\newcommand{\Ext}{\operatorname{Ext}}
\newcommand{\Min}{\operatorname{Min}}
\newcommand{\ra}{\mathcal{R}}
\DeclareMathOperator{\sym}{Sym}
\newcommand{\blue}[1]{\textcolor{cyan}{#1}}
\DeclareMathOperator{\ch}{char}
\DeclareMathOperator{\im}{im}
\newcommand{\myvector}[2]{\begin{blockarray}{c} \blue{#1} \\ \begin{block}{[c]} #2 \\ \end{block}\end{blockarray}}
\title[Harbourne's Conjecture and the containment problem]{A stable version of Harbourne's Conjecture and the containment problem for space monomial curves}
\author[Grifo]{Elo\'isa Grifo}
\address{Department of Mathematics, University of California, Riverside, CA 92521, USA}
\email{eloisa.grifo@ucr.edu}
\subjclass[2010]{Primary: 13A15. Secondary: 13A35, 13H05, 13D02}
\keywords{symbolic powers, containment problem, Harbourne's Conjecture, stable Harbourne, space monomial curves}
\begin{document}

\begin{abstract}
	The symbolic powers $I^{(n)}$ of a radical ideal $I$ in a polynomial ring consist of the functions that vanish up to order $n$ in the variety defined by $I$. These do not necessarily coincide with the ordinary algebraic powers $I^n$, but it is natural to compare the two notions. 
	The containment problem consists of determining the values of $n$ and $m$ for which $I^{(n)} \subseteq I^m$ holds. When $I$ is an ideal of height $2$ in a regular ring, $I^{(3)} \subseteq I^2$ may fail, but we show that this containment does hold for the defining ideal of the space monomial curve $(t^a, t^b, t^c)$. More generally, given a radical ideal $I$ of big height $h$, while the containment $I^{(hn-h+1)} \subseteq I^n$ conjectured by Harbourne does not necessarily hold for all $n$, we give sufficient conditions to guarantee $I^{(hn-h+1)} \subseteq I^n$ for $n \gg 0$.
\end{abstract}

\maketitle

\section{Introduction}\label{section intro}

Given a radical ideal $I$ in a domain $R$, the {\bf $n$-th symbolic power} of $I$ is the ideal given~by
$$I^{(n)} \quad = \bigcap_{P \in \Min(I)} \left( I^n R_P \cap R \right).$$
This is the intersection of the minimal components of the ordinary power $I^n$, where minimal stands for non-embedded rather than height minimal. There are many reasons to consider the symbolic powers of an ideal. If $R$ is a polynomial ring, $I^{(n)}$ is the set of functions that vanish up to order $n$ on the variety defined by $I$, by the Zariski--Nagata Theorem \cite{Zariski,Nagata}. For a survey on symbolic powers, see \cite{SurveySP}.

In general, $I^n \neq I^{(n)}$, although $I^n \subseteq I^{(n)}$ always holds. The containment problem deals with statements of the form $I^{(a)} \subseteq I^b$: given an ideal $I$ and a value $b$, one would like to determine the smallest $a$ for which $I^{(a)} \subseteq I^b$ holds. This question turns out to be surprisingly difficult to answer even over a regular ring. There is, however, an upper bound, depending on {\bf big height} of $I$, which is the largest height of an associated prime of $I$.

\begin{theorem}[Ein-Lazarsfeld-Smith, Hochster-Huneke, Ma-Schwede \cite{ELS,comparison,MaSchwede}]\label{ELSHHMS}
	Let $R$ be a regular ring and $I$ a radical ideal in $R$ of big height $h$. Then for all $n \geqslant 1$,
	$$I^{(hn)} \subseteq I^n.$$
\end{theorem}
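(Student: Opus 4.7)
The plan is to focus on the positive characteristic case (Hochster--Huneke), which is the most self-contained, and indicate how the characteristic zero case (asymptotic multiplier ideals, Ein--Lazarsfeld--Smith) and the mixed characteristic case (perfectoid big Cohen--Macaulay algebras, Ma--Schwede) proceed in parallel. In all three settings, the strategy is to reduce the global containment to a local computation at each minimal prime of $I$, where $R_P$ is a regular local ring of dimension at most $h$ and the maximal ideal is generated by at most $h$ elements.

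In characteristic $p > 0$, the key intermediate is the \emph{Frobenius containment}
\[
I^{(hq)} \subseteq I^{[q]} \qquad \text{for every } q = p^e,
\]
where $I^{[q]}$ denotes the ideal generated by $q$-th powers of elements of $I$. I would verify this after localizing at each associated prime $Q$ of $I^{[q]}$. By Kunz's theorem the Frobenius $F \colon R \to R$ is flat on a regular ring, and base change along $F$ preserves associated primes, so $\Ass(R/I^{[q]}) = \Ass(R/I) = \Min(I)$ since $I$ is radical. It therefore suffices to work at a minimal prime $P$ of $I$ of height $h' \leq h$. After localizing, $R_P$ is regular of dimension $h'$ with maximal ideal $\mathfrak{m}$ minimally generated by $h'$ elements, and $I^{(hq)} R_P = (IR_P)^{hq} \subseteq \mathfrak{m}^{hq}$. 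A pigeonhole argument on monomial exponents in the $h'$ generators of $\mathfrak{m}$ gives $\mathfrak{m}^{h'(q-1)+1} \subseteq \mathfrak{m}^{[q]}$, and a short additional argument tracking the $P$-primary structure of $IR_P$ upgrades this to $I^{(hq)} R_P \subseteq I^{[q]} R_P$.

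The theorem then follows via tight closure. Given $f \in I^{(hn)}$, raising to the $q$-th power yields $f^q \in (I^{(hn)})^q \subseteq I^{(hnq)}$, since $I^{(a)} I^{(b)} \subseteq I^{(a+b)}$. A parametrized refinement of the Frobenius containment (Hochster--Huneke prove a family of symbolic-to-Frobenius inclusions that specialize to the case of interest) then places $cf^q$ inside $(I^{[q]})^n = (I^n)^{[q]}$ for a fixed test element $c$ and all sufficiently large $q$. This exhibits $f \in (I^n)^*$, and since $R$ is regular every ideal is tightly closed, yielding $f \in I^n$. The hardest step is precisely the bridge from the single Frobenius containment to the $n$-fold Frobenius power: iterating naively would require an inclusion of the form $I^{(hnq)} \subseteq (I^{(hq)})^n$, which is false in general, and the parametrized inequality that repairs this is the technical heart of the argument. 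The analogous bridges in the other characteristics are Demailly--Ein--Lazarsfeld subadditivity for the asymptotic multiplier ideals $\JJ(|I|^m)$ together with $\JJ(|I|^{hn}) \subseteq I^n$ (for which the big height $h$ enters via a local computation at minimal primes), and Andr\'e's perfectoid big Cohen--Macaulay algebra together with its induced closure, which replaces Frobenius in mixed characteristic.
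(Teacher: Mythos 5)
The paper does not prove this theorem; it quotes it as a known black box, with the three citations corresponding to the equal characteristic zero, positive characteristic, and mixed characteristic cases respectively, so there is no in-paper argument to compare against. As a blind reconstruction of the Hochster--Huneke positive-characteristic proof, your sketch is essentially accurate, and you correctly flag the genuine technical point: the Frobenius containment $I^{(hq)} \subseteq I^{[q]}$ does not naively iterate to $I^{(hn)} \subseteq I^n$, because $I^{(a)} I^{(b)} \subseteq I^{(a+b)}$ points the wrong way, and a parametrized family of containments is needed to feed the tight-closure test. Two small remarks. First, your ``short additional argument tracking the $P$-primary structure'' is not needed: because $P$ is a \emph{minimal} prime of the radical ideal $I$, one has $I R_P = P R_P$ on the nose, so $I^{(hq)} R_P = (P R_P)^{hq}$ and $I^{[q]} R_P = (P R_P)^{[q]}$, and the pigeonhole $\mathfrak{m}^{h'(q-1)+1} \subseteq \mathfrak{m}^{[q]}$ applies directly using $hq \geq h'q \geq h'(q-1)+1$. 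Second, the paper's introduction records that Hochster--Huneke in fact obtain the slightly sharper $I^{(hq-h+1)} \subseteq I^{[q]}$ from that same pigeonhole, which is the form later exploited in Section~\ref{section stable}. Your remarks identifying Demailly--Ein--Lazarsfeld subadditivity of asymptotic multiplier ideals and Andr\'e's perfectoid big Cohen--Macaulay algebras as the analogues of the Frobenius/tight-closure bridge in the other two cases are consistent with the cited sources.
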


This result, however, does not give a complete answer to the containment problem. The first interesting case is in dimension $3$, where any prime ideal of height $2$ that is not a complete intersection satisfies $P^{(n)} \neq P^n$ for all $n \geqslant 2$ \cite{Huneke1986}, while Theorem \ref{ELSHHMS} shows that $P^{(2n)} \subseteq P^n$, and in particular that $P^{(4)} \subseteq P^2$. Examples suggest this could be tightened.

\begin{question}[Huneke, 2000]\label{Craig's question}
	If $P$ is a prime of height $2$ in a regular local ring, is $P^{(3)} \subseteq P^2$?
\end{question}

Brian Harbourne \cite[Conjecture 8.4.3]{Seshadri} extended Question \ref{Craig's question} to a more general statement for arbitrary homogeneous ideals in $k[\mathbb{P}^N]$. In the years since, versions of Harbourne's question with the same bound and similar hypotheses have been considered by experts, often restricting to radical ideals:

\begin{conjecture}[Harbourne]\label{Harbourne's Conjecture}
	Let $I$ be a radical ideal of big height $h$ in a regular ring $R$. Then for all $n \geqslant 1$, 
	$$I^{(hn-h+1)} \subseteq I^n.$$
\end{conjecture}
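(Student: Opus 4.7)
My approach is to refine the asymptotic multiplier ideal argument used by Ein--Lazarsfeld--Smith to prove Theorem~\ref{ELSHHMS}, attempting to save an extra $h-1$ in the exponent. I would begin in the setting of $R$ essentially of finite type over a field of characteristic zero, with the equal-characteristic-$p$ case to be transferred via Hochster--Huneke tight closure methods and the mixed-characteristic case via Ma--Schwede's perfectoid test ideals, contingent on the analogous refinement holding there. Fix a log resolution $\pi : Y \to \Spec R$, write $I^{(k)} \mathcal{O}_Y = \mathcal{O}_Y(-F_k)$, set $F^{(\bullet)} := \lim_k \tfrac{1}{k} F_k$, and let $K_{Y/\Spec R} = \sum k_i E_i$ be the relative canonical divisor; then the asymptotic multiplier ideal is $\mathcal{J}(c \cdot \|I^{(\bullet)}\|) = \pi_* \mathcal{O}_Y(K_{Y/\Spec R} - \lfloor c F^{(\bullet)} \rfloor)$.

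\textbf{Key reduction.} Two standard inputs---the containment $I^{(m)} \subseteq \mathcal{J}(m \cdot \|I^{(\bullet)}\|)$ and the subadditivity $\mathcal{J}(cn \cdot \|I^{(\bullet)}\|) \subseteq \mathcal{J}(c \cdot \|I^{(\bullet)}\|)^n$---would reduce the conjecture to exhibiting, for each $n \geqslant 1$, some $c \leqslant h - \tfrac{h-1}{n}$ with $\mathcal{J}(c \cdot \|I^{(\bullet)}\|) \subseteq I$: applying such a $c$ with $cn \leqslant hn - h + 1$ yields $I^{(hn-h+1)} \subseteq I^n$. Divisorially, $\mathcal{J}(c \cdot \|I^{(\bullet)}\|) \subseteq I$ is equivalent to the inequality $k_i + 1 > c\, b_i$ at every divisor $E_i$ on $Y$ whose center on $\Spec R$ lies over a minimal prime of $I$, where $b_i := \operatorname{ord}_{E_i}(F^{(\bullet)})$. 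Ein--Lazarsfeld--Smith establish this for $c = h$ from a codimension bound on discrepancies, and the task is to sharpen their estimate by $\tfrac{h-1}{n}$.

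\textbf{Main obstacle.} The difficulty concentrates at the generic Rees valuation $v_P$ along a height-$h$ minimal prime $P$, where $b_i = 1$ and $k_i = h-1$, so the ELS inequality $k_i + 1 > h \cdot b_i$ is \emph{already tight} and no pure discrepancy calculation will improve it. To extract the missing $\tfrac{h-1}{n}$ I would exploit the integrality of $v_P$-orders: for $f \in I^{(hn-h+1)}$ the integer $(hn-h+1)\, b_i = hn - h + 1$ is not a multiple of $h$, so the floor $\lfloor (hn-h+1) F^{(\bullet)} \rfloor$ should force an extra unit of vanishing along the generic divisors, and the plan is to convert this discreteness into a net saving after $n$ rounds of subadditivity, possibly coupled with a Brian\c{c}on--Skoda-type refinement or a local cohomology analysis of the graded pieces $I^{(m)} / I^{m+1}$. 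The principal risk is that the required refinement at these tight generic divisors simply is not accessible from the log resolution---the very same obstruction responsible for the open status of Question~\ref{Craig's question} in the classical case $h = n = 2$.
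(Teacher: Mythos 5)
The statement you are trying to prove is stated in the paper as a \emph{conjecture}, and the paper never proves it --- in fact it cannot be proved as written, because it is false. As the paper itself recalls, Dumnicki, Szemberg and Tutaj-Gasi\'nska found the Fermat configuration $I=\left(x(y^3-z^3),\, y(z^3-x^3),\, z(x^3-y^3)\right)\subseteq \mathbb{C}[x,y,z]$, a radical ideal of big height $h=2$ in a regular ring with $I^{(3)}\nsubseteq I^2$; this is precisely the case $n=2$ of the containment $I^{(hn-h+1)}\subseteq I^n$ you set out to establish, and the paper revisits this failure in Example \ref{Alexandra's Fermat example}. Consequently your key reduction cannot go through: for $h=n=2$ it asks for some $c\leqslant 3/2$ with $\mathcal{J}(c\cdot\|I^{(\bullet)}\|)\subseteq I$, and for the Fermat configuration no such $c$ can exist, since combined with $I^{(3)}\subseteq\mathcal{J}(3\cdot\|I^{(\bullet)}\|)$ and subadditivity it would yield the containment that is known to fail. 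The integrality or Brian\c{c}on--Skoda-type saving you hope to extract at the tight generic Rees valuations is therefore not merely technically difficult but impossible in this generality; the obstruction you flag in your final paragraph is genuine and fatal, not a risk to be managed.

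Beyond this, your writeup is a research plan rather than a proof: the decisive sharpening step is explicitly deferred (``the plan is to convert this discreteness into a net saving''), and the positive and mixed characteristic cases are declared ``contingent on the analogous refinement holding there.'' The paper's actual stance is the correct one: it treats the statement as an open (and in general false) conjecture, proves it only in special situations --- for instance $P^{(3)}\subseteq P^2$ for space monomial curves via the explicit resolution and $\operatorname{Ext}$ criteria of Section \ref{section dim 3 matrix conditions} (Theorem \ref{thm space monomial curves}), and for F-pure quotients via Fedder-type arguments --- and otherwise studies the stable version (Conjecture \ref{stable conjecture}, Theorem \ref{thm stable}), which is exactly the retreat forced by the counterexamples. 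If you want a salvageable target, aim your multiplier-ideal refinement at classes of ideals where the resurgence satisfies $\rho(I)<h$, or at the stable containments $I^{(hn-h+1)}\subseteq I^n$ for $n\gg 0$, rather than at the conjecture for all $n$ and all radical ideals.
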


The value suggested by this conjecture is very natural. In fact, Hochster and Huneke's proof \cite{comparison} of Theorem \ref{ELSHHMS} uses the fact that in prime characteristic $p$, $I^{(hq)} \subseteq I^{[q]}$ for all $q = p^e$; this boils down to a beautiful application of the Pigeonhole Principle, which naturally yields the sharper containment $I^{(hq-h+1)} \subseteq I^{[q]}$. However, Conjecture \ref{Harbourne's Conjecture} can fail; Dumnicki, Szemberg, and Tutaj-Gasi\'nska \cite{counterexamples} found the first counterexample to $I^{(3)} \subseteq I^2$, and others followed, such as \cite{HaSeFermat,MalaraSzpond,BenCounterexample,Akesseh,MalaraSzpond,RealsCounterexample}. Several of these counterexamples turn out to belong to the class of radical ideals of height $2$ that arise as the singular loci of reflection arrangements of a finite complex reflection group; Drabkin and Seceleanu have recently given a complete characterization of which ideals in this class satisfy $I^{(3)} \subseteq I^2$ \cite{DrabkinSeceleanu}.

Despite all this, there are no known counterexamples to Question \ref{Craig's question}, which remains open even in dimension $3$.

\begin{theoremx}[see Theorem \ref{thm space monomial curves}]
	Let $k$ be a field of characteristic not $3$, and consider $R = k \llbracket x, y, z \rrbracket$ or $R = k [x, y, z]$.
	Let $P$ be the prime ideal in $R$ defining the space monomial curve $x=t^a$, $y = t^b$ and $z = t^c$. Then 
	$$P^{(3)} \subseteq P^2.$$
\end{theoremx}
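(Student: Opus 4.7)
The plan combines two sources of information: the general ``stable Harbourne'' machinery alluded to in the abstract, and the classical structure of defining ideals of space monomial curves. As a first reduction, by a theorem of Herzog the prime $P$ is either a complete intersection (in which case $P^{(n)} = P^n$ for all $n$ and the statement is trivial) or is minimally generated by the three $2 \times 2$ minors of a $2 \times 3$ matrix with monomial entries in $x, y, z$; we may assume the latter. Since both $R$ and $P$ descend to $\mathbb{Z}[x,y,z]$, standard flat descent reduces the problem to positive characteristic $p \neq 3$, where the Hochster--Huneke pigeonhole bound $P^{(2q-1)} \subseteq P^{[q]}$ is available for every $q = p^e$ as a starting point.

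The key task is to upgrade this Frobenius-power containment to the ordinary-power containment $P^{(3)} \subseteq P^2$. I would attack this along two complementary routes. The first is the stable statement: under a suitable F-singularity hypothesis on $R/P$, one has $P^{(2n-1)} \subseteq P^n$ for all $n \gg 0$. Since $R/P \cong k[t^a, t^b, t^c]$ is a one-dimensional numerical semigroup ring, its F-singularities are governed by the semigroup $\langle a, b, c\rangle$ and can be analyzed directly; the characteristic restriction $p \neq 3$ should correspond to the characteristic at which the required F-singularity property can fail for such rings. The second route is direct: exploit the Hilbert--Burch presentation to put any $f \in P^{(3)}$ in a normal form modulo the three syzygies, and then verify the membership $f \in P^2$ term by term, using the Zariski--Nagata description of $P^{(3)}$ as the set of functions annihilated by all differential operators of order $\leq 2$ evaluated into $P$.

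The main obstacle I anticipate is the base case $n = 2$ itself. A stable statement is by design insensitive to small $n$, so obtaining the containment for the smallest non-trivial exponent requires either a quantitative bound of the form ``stable threshold $\leq 2$'' tailored to monomial curves, or an explicit computation as above. The factor of $3$ that naturally appears when one applies a second-order differential operator to a cubic monomial (or equivalently in the expansion of a single Herzog generator raised to the third power) is the most plausible explanation for the characteristic hypothesis, and the crux of the proof is likely the coordination between this elementary binomial calculation in the specific case of the minors of the Hilbert--Burch matrix and the general F-singularity input for one-dimensional semigroup rings.
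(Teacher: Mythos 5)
There is a genuine gap: you correctly identify Herzog's structure theorem and even guess that a stray factor of $3$ is the source of the characteristic hypothesis, but neither of your two routes actually reaches $P^{(3)} \subseteq P^2$. The F-singularity route cannot work as stated: the semigroup rings $k[t^a,t^b,t^c]$ are in general not F-pure, and whether they are has nothing to do with the characteristic being $3$; moreover, the stable statement you invoke is, as you yourself concede, silent on the smallest exponent $n=2$, which is the whole theorem. So for a space monomial curve in characteristic $p \geqslant 5$ you are left with only the pigeonhole bound $P^{(2q-1)} \subseteq P^{[q]}$ and no mechanism to convert it into $P^{(3)} \subseteq P^2$; the only characteristic where pigeonhole alone suffices is $p=2$, where $P^{(3)} \subseteq P^{[2]} \subseteq P^2$ (and this is exactly how the paper disposes of characteristic $2$ before doing the real work). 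The reduction to positive characteristic via descent is therefore not a reduction of the difficulty at all.

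Your second route, ``put $f \in P^{(3)}$ in normal form modulo the syzygies and verify membership term by term,'' is a placeholder for the entire content of the proof rather than an argument. What the paper actually does is make this computable: since $I = I_2(M)$ is of linear type with Rees algebra a complete intersection $S/(F,G)$, one extracts free resolutions of all powers $I^n$ from the degree-$n$ strands, builds the explicit lift $D = f_1D_1 + f_2D_2 + f_3D_3$ of $3\cdot(I^n \subseteq I^{n-1})$ (this is where the factor $3$ genuinely enters, close to your guess but not via Zariski--Nagata differential operators, which give no comparable handle on membership in $P^2$), and converts $P^{(3)} \subseteq P^2$ via an $\Ext^2$ criterion into the concrete statement that the vector $(f_1,0,0)^T$ lies in the image of the explicit $3 \times 12$ matrix $H_3$ of Theorem \ref{main result for 23}. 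The containment is then proved by exhibiting an explicit column combination valid whenever $a_1 \mid b_2a_3$ (Theorem \ref{sufficient condition for 23}), together with the observation that, after row and column permutations, every Herzog matrix for $P(a,b,c)$ satisfies this divisibility. None of these ingredients --- the criterion, the divisibility condition, or the explicit combination --- appears in your proposal, so the crux of the theorem remains unproved.
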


This result will follow once we establish sufficient conditions for $I^{(n)} \subseteq I^m$ to hold for each $n > m$ whenever $I$ is generated by the maximal minors of a $2 \times 3$ matrix. This is done in Section \ref{section dim 3 matrix conditions}, following Alexandra Seceleanu's methods \cite{Seceleanu} closely.

More generally, Conjecture \ref{Harbourne's Conjecture} does hold whenever the ideal $I$ has nice properties: if $I$ is the defining ideal of a general set of points in $\mathbb{P}^2$ \cite{BoH} or $\mathbb{P}^3$ \cite{Dumnicki2015}, or if $R$ is of prime characteristic $p$ (respectively, essentially of finite type over a field of characteristic $0$) and $R/I$ is F-pure (respectively, of dense F-pure type) \cite{GrifoHuneke}. Moreover, there are no counterexamples to $I^{(hn-h+1)} \subseteq I^n$ for $n \gg 0$. One might then ask if requiring that $I^{(hk-h+1)} \subseteq I^k$ holds for \emph{some} $k$ is enough to guarantee that $I^{(hn-h+1)} \subseteq I^n$ for $n \gg 0$.

\begin{theoremx}[see Theorem \ref{thm stable}]\label{thmB}
	Let $R$ be a regular ring containing a field, and let $I$ be a radical ideal in $R$ with big height $h$. If
	$$I^{(hm-h)} \subseteq I^{m}$$
	for some $m \geqslant 2$, then
	$$I^{(hk-h)} \subseteq I^{k}$$
	for all $k \gg 0$ (indeed, for all $k \geqslant hm$).
\end{theoremx}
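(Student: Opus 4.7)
My strategy would be to reduce to prime characteristic $p$ and exploit the Frobenius endomorphism, in the spirit of Hochster and Huneke's proof of Theorem~\ref{ELSHHMS}. Since $R$ contains a field, standard reduction-modulo-$p$ techniques---together with the compatibility of symbolic power containments under sufficiently general flat base change---reduce the problem to the case where $R$ has characteristic $p > 0$. The workhorse is the pigeonhole identity of Hochster--Huneke: for any radical ideal $J$ of big height $h$ in a regular ring of characteristic $p$, one has $J^{(hq - h + 1)} \subseteq J^{[q]}$ for every $q = p^e$. Applied directly to $I$ this recovers the ELS/Hochster--Huneke containment, while the hypothesis $I^{(hm-h)} \subseteq I^m$ supplies one extra unit of ``slack'' in the symbolic index that we intend to propagate.

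Concretely, applying the $e$-th iterate of Frobenius to the hypothesis yields
\[
\bigl(I^{(hm-h)}\bigr)^{[q]} \;\subseteq\; (I^m)^{[q]} \;\subseteq\; I^{mq},
\]
which can be combined with other symbolic powers via the multiplicativity $I^{(a)} \cdot I^{(b)} \subseteq I^{(a+b)}$. To establish $I^{(hk-h)} \subseteq I^k$ for a fixed $k \geq hm$, the plan is to write $k = jm + r$ with $j \geq h$ and $0 \leq r < m$, and then combine $j$ applications of the hypothesis (possibly Frobenius-twisted) with the Hochster--Huneke pigeonhole applied to a remainder piece of symbolic index roughly $hr$. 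Taking $q = p^e$ large compared to $k$ and then passing to the limit as $e \to \infty$---using Noetherianness to absorb the vanishing Frobenius correction---would yield the desired containment, after which one descends back to arbitrary characteristic via standard limit arguments.

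The main obstacle is that symbolic powers do not factor: in general $I^{(a+b)}$ is strictly larger than $I^{(a)} \cdot I^{(b)}$, so one cannot naively decompose an arbitrary element of $I^{(hk-h)}$ as a product of pieces lying in $I^{(hm-h)}$. The Frobenius endomorphism provides the crucial workaround, since the Hochster--Huneke pigeonhole effectively furnishes a substitute ``factoring through $I^{[q]}$'' for symbolic powers of the form $I^{(hq - h + 1)}$. Making this index calculus yield precisely the explicit threshold $k \geq hm$---rather than a softer $k \gg 0$---is what will require delicate bookkeeping, and I expect this to be the technical heart of the proof.
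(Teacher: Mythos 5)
Your proposal identifies the right circle of ideas (the pigeonhole principle, Frobenius powers, regularity), but as written it has a genuine gap at exactly the point you flag as ``the technical heart'': you never produce the decomposition that lets the hypothesis $I^{(hm-h)} \subseteq I^m$ act on $I^{(hk-h)}$. The manipulations you actually write down cannot supply it. Applying Frobenius to the hypothesis gives $\bigl(I^{(hm-h)}\bigr)^{[q]} \subseteq (I^m)^{[q]}$, and multiplicativity gives $I^{(a)} I^{(b)} \subseteq I^{(a+b)}$ --- but both of these go in the wrong direction: they bound products of symbolic powers from above by a larger symbolic power, whereas what is needed is a containment of a single large symbolic power \emph{inside} a product, namely something like $I^{(hk-h)} \subseteq \bigl(I^{(hm-h)}\bigr)^h I^{\,k-hm}$. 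Without such a statement (or a fully worked-out characteristic-$p$ substitute, including the reduction to characteristic $p$ in equal characteristic zero), the plan to ``combine $j$ applications of the hypothesis with a pigeonhole remainder'' does not close; it is precisely the step you defer.

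The paper's proof is essentially this missing ingredient and nothing else: it cites Johnson's generalization of the Ein--Lazarsfeld--Smith/Hochster--Huneke theorem, $I^{(hn + a_1 + \cdots + a_n)} \subseteq I^{(a_1+1)} \cdots I^{(a_n+1)}$, and chooses $n = t+h$, $a_1 = \cdots = a_h = hm-h-1$, $a_{h+1} = \cdots = a_{h+t} = 0$ (where $k = hm+t$) to get $I^{(hk-h)} \subseteq \bigl(I^{(hm-h)}\bigr)^h I^{t} \subseteq (I^m)^h I^t = I^k$, with no reduction to positive characteristic needed in the proof of the theorem itself. Your Frobenius strategy could in principle be pushed through --- the pigeonhole arithmetic for extracting $q$-th powers does work out precisely when $k \geqslant hm$, which is where the threshold comes from --- but doing so amounts to reproving (a special case of) Johnson's containment together with the Hochster--Huneke descent machinery, none of which appears in your sketch. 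To repair the proposal, either invoke such a generalized containment directly, or carry out the localization-at-minimal-primes pigeonhole argument and the equal-characteristic-zero reduction explicitly.
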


We also show this statement is not vacuous. In particular, in Section \ref{section space monomial curves} we find classes of space monomial curves $(t^a, t^b, t^c)$ satisfying such conditions. Using the methods from \cite{GrifoHuneke}, we give a refinement of Theorem \ref{thmB} whenever $R$ is of prime characteristic and $R/I$ is F-pure. We discuss this and other evidence pointing towards a stable version of Conjecture \ref{Harbourne's Conjecture} in Section \ref{section stable}.

\section{A stable version of Harbourne's Conjecture}\label{section stable}

In this section, we will study the following stable version of Harbourne's Conjecture:

\begin{conjecture}\label{stable conjecture}
	Let $I$ be a radical ideal of big height $h$ in a regular ring $R$. Then
	$$I^{(hn-h+1)} \subseteq I^n$$
	for all $n$ sufficiently large.
\end{conjecture}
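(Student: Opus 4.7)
The strategy is to convert the asymptotic statement into a single ``seed'' containment and then attack that seed.

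The formal reduction uses Theorem \ref{thmB}: from the single hypothesis $I^{(hm-h)} \subseteq I^m$ at some $m \geq 2$ one already obtains the stronger conclusion $I^{(hk-h)} \subseteq I^k$ for all $k \geq hm$, which a fortiori gives $I^{(hk-h+1)} \subseteq I^k$. So Conjecture \ref{stable conjecture} would follow from producing, for every radical ideal $I$, a \emph{single} integer $m$ with $I^{(hm-h)} \subseteq I^m$. At this point no further asymptotics are needed beyond Theorem \ref{thmB} itself.

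To find such an $m$ I would work through the resurgence $\rho(I) = \sup\{a/b : I^{(a)} \not\subseteq I^b\}$ of Bocci--Harbourne, which satisfies $\rho(I) \leq h$ by Theorem \ref{ELSHHMS}. If one can show the strict inequality $\rho(I) < h$, then $(hm-h)/m = h - h/m$ exceeds $\rho(I)$ for all $m$ sufficiently large and the seed containment is immediate. Two plausible routes toward $\rho(I) < h$: (a) in characteristic zero, a Skoda/subadditivity refinement of the asymptotic multiplier ideal argument of \cite{ELS}, aiming to shave one unit off the exponent in the bound $I^{(hn)} \subseteq I^n$ by extracting information from the graded symbolic family $\{I^{(n)}\}_n$; (b) in characteristic $p$, a quantitative sharpening of the Pigeonhole step of \cite{comparison}, built around an explicit $F$-singularity defect of $R/I$ in the spirit of \cite{GrifoHuneke}, transferred to characteristic zero by standard reduction mod $p$.

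The main obstacle, and the real crux of the conjecture, is establishing the strict inequality $\rho(I) < h$ with no structural hypothesis on $I$. The counterexamples to $I^{(3)} \subseteq I^2$ cited in the introduction show that at small $n$ the ratio $(hn-h+1)/n$ genuinely can be too small, so the required improvement is asymptotic rather than uniform: one cannot simply tune constants in the existing proofs of Theorem \ref{ELSHHMS}. Isolating a structural invariant of $R/I$ that forces $\rho(I)$ to drop strictly below $h$, in the absence of $F$-purity or an analogous regularity assumption on the singularities of $R/I$, is where I expect the real difficulty to lie, and it is the point at which the present evidence (F-pure refinements, the space monomial curve case of Theorem A, and Theorem \ref{thmB}) would have to be upgraded to a fully general argument.
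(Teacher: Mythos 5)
The statement you are trying to prove is labeled a \emph{conjecture} in the paper, and the paper does not prove it: it only establishes sufficient conditions (Theorem \ref{thm stable}, Discussion \ref{remark n+h}, Remark \ref{remark any C}) under which the stable containment holds. Your proposal reproduces exactly this conditional structure --- reduce to a single seed containment $I^{(hm-h)} \subseteq I^m$ via Theorem \ref{thm stable}, or alternatively establish $\rho(I) < h$ and invoke the resurgence argument of Remark \ref{remark any C} --- but it does not close either gap, and you say so yourself. That is the genuine gap: producing the seed containment for an arbitrary radical ideal, or proving $\rho(I) < h$ with no structural hypothesis on $R/I$, is precisely as open as the conjecture itself, so the proposal does not advance beyond what the paper already records as conditional evidence.

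Two further cautions about your suggested routes toward $\rho(I) < h$. First, routes (a) and (b) are stated only as aspirations; no mechanism is given for extracting the needed ``one unit'' improvement from the multiplier-ideal or pigeonhole arguments, and the known counterexamples to $I^{(3)} \subseteq I^2$ (the Fermat configurations) show any such improvement cannot be uniform in $n$, as you note. Second, the Harbourne--Seceleanu examples mentioned in Section \ref{section stable} give failures of $I^{(hn-h+1)} \subseteq I^n$ for $n$ arbitrarily large (with $I$ depending on $n$), so there is no hope of a bound of the form $\rho(I) \leqslant h - \epsilon$ with $\epsilon$ independent of $I$; any successful argument must produce, for each fixed $I$, an invariant forcing strict inequality, and the paper explicitly states that it is not even known whether ideals with $\rho(I) = h$ exist. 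In short: your reduction is correct and matches the paper's own framework, but the statement remains a conjecture, and your proposal should be presented as a strategy, not a proof.
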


\vspace{1em}

Conjecture \ref{stable conjecture} holds 
\begin{enumerate}
	\item if $I^{(hm-h)} \subseteq I^m$ holds for some value $m$ (see Theorem \ref{thm stable}),
	\item if $I^{(hm-h+1)} \subseteq I^m$ for some $m$ and $I^{(n+h)} \subseteq I I^{(n)}$ for all $n \geqslant m$ (see Discussion \ref{remark n+h}), or
	\item if the resurgence $\rho(I)$ satisfies $\rho(I)<h$ (see Remark \ref{remark any C}).
\end{enumerate}

\vspace{1em}

More generally, we will study the following question:

\begin{question}\label{question any C}
	Let $I$ be a radical ideal of big height $h$ in a regular ring $R$. Given an integer $C > 0$, does there exist $N$ such that
	$$I^{(hn-C)} \subseteq I^n$$
	for all $n \geqslant N$?
\end{question}

\vspace{1em} 

The answer to Question \ref{question any C} is affirmative
\begin{enumerate}
	\item if the resurgence $\rho(I)$ satisfies $\rho(I)<h$ (see Remark \ref{remark any C}),
	\item if $I^{(hm-C)} \subseteq I^m$ for some $m$ and $I^{(n+h)} \subseteq I I^{(n)}$ for all $n \geqslant 1$ (see Discussion \ref{remark n+h}), and in particular
	\item if $I^{(hm-C)} \subseteq I^m$ for some $m$, $R$ has characteristic $p > 0$, and $R/I$ is an F-pure ring (see Theorem \ref{thm Fpure stable}).
\end{enumerate}

\vspace{1em}

Moreover, we are not aware of any examples of ideals $I$ for which the answer to Question \ref{question any C} is negative.

\begin{example}
	Harbourne and Seceleanu \cite{HaSeFermat} found ideals $I$ with $I^{(hn-h+1)} \nsubseteq I^n$ for $n$ arbitrarily large; however, their ideals $I$ depend on the choice of $n$. Moreover, as shown in \cite[Theorem 3.2]{DHNSST2015}, Harbourne and Seceleanu's examples still satisfy $\rho(I)<h$, which as we will see guarantee imply that Conjecture \ref{stable conjecture} holds.
\end{example}

\begin{remark}
	Whenever we assume that $I$ is a radical ideal of big height $h$, one may instead take $I$ to be any ideal, in which case $h$ should be replaced by the maximum of all the analytic spreads of $I_P$, where $P$ varies over the set of associated primes of $I$. This is what Hochster and Huneke call the key number \cite[Discussion 1.1]{HHfine} of $I$. However, we write our results for radical ideals of big height $h$ with the goal of improving readability.
\end{remark}

We start by proving that Conjecture \ref{stable conjecture} holds if $I^{(hm-h)} \subseteq I^m$ for some $m$. In fact, the following stronger statement holds:

\newpage

\begin{theorem}\label{thm stable}
	Let $R$ be a regular ring containing a field, and let $I$ be a radical ideal in $R$ with big height $h$. If
	$$I^{(hm-h)} \subseteq I^{m}$$
	for some $m \geqslant 2$, then
	$$I^{(hk-h)} \subseteq I^{k}$$
	for all $k \geqslant hm$.
\end{theorem}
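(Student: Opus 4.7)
The plan is to bootstrap from the single hypothesis $I^{(hm-h)} \subseteq I^m$ by combining it multiplicatively with the Ein--Lazarsfeld--Smith/Hochster--Huneke/Ma--Schwede theorem (Theorem~\ref{ELSHHMS}). Raising the hypothesis to the $h$-th power gives $(I^{(hm-h)})^h \subseteq I^{hm}$, and for any $k \geq hm$, setting $j := k - hm \geq 0$, one obtains
\[
(I^{(hm-h)})^h \cdot I^j \,\subseteq\, I^{hm} \cdot I^j \,=\, I^k.
\]
It therefore suffices to establish the key containment
\[
I^{(hk-h)} \,\subseteq\, (I^{(hm-h)})^h \cdot I^j. \tag{$\star$}
\]

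A local check at each minimal prime $P$ of $I$ (where $IR_P = PR_P$, so $I^{(n)}R_P = P^n R_P$) reduces the localized version of $(\star)$ to the exponent inequality $hk - h \geq h(hm - h) + j$. Substituting $k = hm + j$, this simplifies to $(h-1)(j+h) \geq 0$, which holds since $h \geq 1$ and $j \geq 0$. Thus $(\star)$ is valid after localization at every minimal prime of $I$.

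The main obstacle will be promoting this inclusion at the minimal primes of $I$ to a global containment in $R$. The right-hand side of $(\star)$ is a product of ideals (an ordinary power of a symbolic power times an ordinary power) that may carry embedded primary components not supported on $\Min(I)$, so the local inclusion does not a priori give the global one. A naive symbolic-closure argument based only on information at $\Min(I)$ yields merely $I^{(hk-h)} \subseteq I^{(k)}$, which is weaker than the desired containment in the ordinary power $I^k$.

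To bridge this gap, I would work either through a direct primary decomposition argument, exploiting regularity of $R$ to rule out the relevant embedded components, or reduce to positive characteristic (using that $R$ contains a field) and invoke the Hochster--Huneke pigeonhole bound $I^{(hq - h + 1)} \subseteq I^{[q]}$ for Frobenius powers $q = p^e$, combined with a tight-closure argument to control embedded components globally. The threshold $k \geq hm$ enters precisely because the hypothesis must be raised to the $h$-th power to "pay for" the gap between the index $hk-h$ of the target symbolic power and the index $hk$ demanded by ELSHHMS alone; anything smaller than $hm$ leaves insufficient room in the ordinary-power factor $I^j$ for the decomposition to close up.
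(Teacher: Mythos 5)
You have correctly isolated the containment that drives the whole argument: with $j=k-hm$, everything reduces to
$$I^{(hk-h)} \subseteq \bigl( I^{(hm-h)} \bigr)^h I^{j},$$
which is exactly the intermediate step in the paper (there with $t=j$). The problem is that you never actually prove it. Your local computation at the minimal primes of $I$ is correct but carries essentially no content: since the right-hand side is an ordinary product of ideals, checking at $\Min(I)$ only shows the containment into the intersection of the $P$-primary components for $P \in \Min(I)$, i.e.\ a symbolic-type statement, and you acknowledge that this is weaker than what is needed. The bridge you then gesture at --- ``a direct primary decomposition argument'' or ``reduce to characteristic $p$ and use the pigeonhole bound $I^{(hq-h+1)} \subseteq I^{[q]}$ plus tight closure'' --- is precisely the hard part, and as stated it is not a proof. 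The pigeonhole-plus-Frobenius argument of Hochster--Huneke works for the target $I^n$ because $I^{[q]}$ has no embedded primes (Frobenius is flat on a regular ring), so a local check at $\Ass(I)$ suffices; the product $\bigl( I^{(hm-h)} \bigr)^h I^{j}$ enjoys no such property, and there is no routine way to ``rule out the relevant embedded components.'' Controlling them is equivalent to the statement you are trying to prove.

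What fills this gap in the paper is Johnson's generalization of Theorem~\ref{ELSHHMS}: for all $n \geqslant 1$ and $a_1, \dots, a_n \geqslant 0$,
$$I^{(hn + a_1 + \cdots + a_n)} \subseteq I^{(a_1+1)} \cdots I^{(a_n+1)},$$
applied with $n=j+h$, $a_1=\cdots=a_h=hm-h-1$, $a_{h+1}=\cdots=a_{h+j}=0$; this yields exactly the containment $(\star)$, after which your multiplicative bookkeeping (raise the hypothesis to the $h$-th power and absorb $I^j$) finishes the proof just as in the paper. So your outer structure and the explanation of the threshold $k \geqslant hm$ match the paper, but the key input is a genuine theorem --- proved by the same tight closure/multiplier ideal/perfectoid machinery as Theorem~\ref{ELSHHMS}, not by elementary localization --- and without citing or reproving it your argument does not close.
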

	
\begin{proof}
The key ingredient we will need is the following generalization \cite{Johnson} of Theorem \ref{ELSHHMS}: given any $n \geqslant 1$ and $a_1, \ldots, a_n \geqslant 0$, we have
$$I^{(hn + a_1 + \cdots + a_n)} \subseteq I^{(a_1 + 1)} \cdots I^{(a_n+1)}.$$
	Fix $k \geqslant hm$, and write $k=hm+t$ for some $t \geqslant 0$. Apply the formula above for $n=h+t$, $a_1= \cdots = a_h = hm-h-1$ and $a_{h+1} = \cdots = a_{h+t} = 0$. With these values,
	$$hn + a_1 + \cdots + a_n = h(h+t)+h(hm-h-1) = h(hm+t-1) = hk-h,$$
	and the formula becomes
	$$I^{(hk-h)} \subseteq \left( I^{(mh-h)} \right)^h I^{t}.$$
	By assumption, $I^{(mh-h)} \subseteq I^{m}$. Then
	$$I^{(hk-h)} \subseteq \left( I^{(mh-h)} \right)^h I^{t} \subseteq \left( I^{m} \right)^h I^{t} = I^{mh+t} = I^{k}.$$
\end{proof}

In particular, given an ideal $I$, the containment $I^{(hn-h+1)} \subseteq I^n$ holds for $n \gg 0$ as long as we can find one value $m$ such that $I^{(hm-h)} \subseteq I^m$. 

\begin{example}
	Let $k$ be a field. The monomial ideal
	$$I = \bigcap_{i \neq j} \left( x_i, x_j \right) = \left( x_1 \cdots \widehat{x_i} \cdots x_v \,\, \big| \,\, 1 \leqslant i \leqslant v \right)$$
	in $v$ variables has big height $2$, and since monomial ideals satisfy Conjecture \ref{Harbourne's Conjecture}, by \cite[Example 8.4.5]{Seshadri}, $I^{(2n-1)} \subseteq I^n$ for all $n \geqslant 1$. However, as noted in \cite[Example 1.2]{HHfine} or \cite[Example 3.5]{GrifoHuneke}, $I^{(2n-2)} \nsubseteq I^n$ for all $n<v$. We claim that $I^{(2v-2)} \subseteq I^v$.
	
	First, note that $I^{(2v-2)}$ is generated by the monomials $x_1^{a_1} \cdots x_v^{a_v}$ with $a_i + a_j = 2v-2$ for all $i \neq j$. We will show that all such monomials are in $I^{v}$, which will prove our claim. One of those monomials is $\left( x_1 \cdots x_v \right)^{v-1} \in I^{(2v-2)}$, which we can rewrite as
	$$\left( x_1 \cdots x_v \right)^{v-1} = \prod_{i=1}^v \left( x_1 \cdots \widehat{x_i} \cdots x_v \right) \in I^v.$$
	Given any other generator $x_1^{a_1} \cdots x_v^{a_v}$ of $I^{(2v-2)}$, $a_i < v-1$ for some $i$; fix such $i$. For all $j \neq i$, $x_1^{a_1} \cdots x_v^{a_v} \in \left( x_i, x_j \right)^{2v-2}$, and thus $a_j \geqslant 2v-2-a_i > v-1$. Therefore, the given monomial is a multiple of $\left( x_1 \cdots \widehat{x_i} \cdots x_v \right)^v \in I^v$.
	
	We can now apply Theorem \ref{thm stable} to conclude that $I^{(2n-2)} \subseteq I^n$ for all $n \gg 0$. In particular, $I^{(2n-2)} \subseteq I^n$ for all $n \geqslant 2v$.
\end{example}

\vspace{1em}

We will see other classes of ideals satisfying the conditions of Theorem \ref{thm stable} in Section \ref{section dim 3 matrix conditions}.

\vspace{1em}

We now turn our attention to Question \ref{question any C}: given $C$, does $I^{(hn-C)} \subseteq I^n$ hold for $n \gg 0$? The answer is yes whenever an invariant of $I$ known as the resurgence does not take its largest possible value.

\begin{remark}\label{remark any C}
	The \emph{resurgence} of $I$, first defined by Bocci and Harbourne in \cite{BoH}, is
	$$\rho(I) := \sup \left\lbrace \frac{a}{b} \,\, \big| \,\, I^{(a)} \nsubseteq I^b \right\rbrace.$$
	Over a regular ring, if $I$ has big height $h$ then $1 \leqslant \rho(I) \leqslant h$, where the last inequality is a consequence of Theorem \ref{ELSHHMS}. But as long as $\rho(I)< h$, then in fact $I^{(hn-C)} \subseteq I^n$ holds for
	$$n > \displaystyle\frac{C}{h-\rho(I)}.$$
	Indeed, for such $n$ we have
	$$\frac{hn-C}{n} > \rho(I),$$
	and by definition of resurgence, this implies that $I^{(hn-C)} \subseteq I^n$.
Therefore, proving $\rho(I)<h$ is enough to answer Question \ref{question any C} affirmatively, which is useful since $\rho(I)$ can sometimes be computed or bounded without explicitly solving the containment problem.
		
	The same conclusion holds if instead we consider the potentially smaller quantity
	$$\rho''(I) := \limsup_{b \rightarrow \infty} \rho_b(I) \quad \textrm{ where } \quad \rho_b(I) := \sup \left\lbrace \frac{a}{b} \,\, \big| \,\, I^{(a)} \nsubseteq I^b, a \geqslant 1 \right\rbrace.$$
	
	 Indeed, if this value $\rho''(I)$ is strictly less than $h$, then there exists some $N$ such that
	$$\sup \left\lbrace \frac{a}{b} \,\, \big| \,\, I^{(a)} \nsubseteq I^b, b \geqslant N\right\rbrace < h,$$
	which is enough to conclude that given any $C$, $I^{(hn-C)} \subseteq I^n$ holds for all $n$ large enough. Rephrasing, Question \ref{question any C} has a positive answer as long as 
	$$\limsup_b \, \frac{\max \left\lbrace a \geqslant 1 \,\, \big| \,\, I^{(a)} \nsubseteq I^b \right\rbrace}{b} < h.$$

	As pointed out by the anonymous referee, this quantity $\rho''(I)$ agrees with another invariant, first defined in \cite{AsymptoticResurgence}: 
	$${\rho}_a'(I) := \limsup_t \rho(I,t), \textrm{ where } \rho(I,t) := \sup \left\lbrace \frac{a}{b} \,\, \big| \,\, I^{(a)} \nsubseteq I^b \textrm{ and } a, b \geqslant t \right\rbrace.$$
	To see that $\rho''(I) = \rho_a'(I)$, first note that $I^{(m-1)} \nsubseteq I^m$ for all $m$ implies both $\rho''(I) \geqslant 1$ and $\rho_a'(I) \geqslant 1$. Moreover, $\rho(I,t)$ is a non-increasing sequence on $t$, and thus $\rho_a'(I) = \displaystyle\lim_{t \rightarrow \infty} \rho(I,t)$.

	Suppose $\rho_a'(I) > \rho''(I)$. Since $\rho_a'(I)$ is the limit of the nonincreasing sequence $\rho(I,t)$, then $\rho(I,t) > \rho''(I)$ for all $t$. In particular, there exist $a_1, b_1 \geqslant 1$ such that $a_1/b_1 > \rho''(I)$ and $I^{(a_1)} \nsubseteq I^{b_1}$. We inductively construct sequences $(a_n)$, $(b_n)$ as follows: since $\rho(I,b_{i}+1) > \rho''(I)$ we can pick $b_{i+1} > b_i$ such that $a_{i+1}/b_{i+1} > \rho''(I)$ and $I^{(a_{i+1})} \nsubseteq I^{b_{i+1}}$. These conditions also imply $a_{i+1}/b_{i+1} \leqslant \rho_{b_i}(I)$, and since $\limsup \rho(I,b) > \rho''(I)$, we can chose $(a_i)$ and $(b_i)$ in such a way that guarantees $\limsup_{i \rightarrow \infty} \frac{a_i}{b_i} > \rho''(I)$. This leads to a contradiction:
	$$\rho''(I) = \limsup_{b \rightarrow \infty} \rho_b(I) \geqslant \limsup_{i \rightarrow \infty} \rho_{b_i}(I) \geqslant \limsup_{i \rightarrow \infty} \,\frac{a_i}{b_i} > \rho''(I).$$
	
	This shows that $\rho_a'(I) \leqslant \rho''(I)$. If $\rho''(I) =1$, we are done. If $\rho''(I)>1$, there exist an increasing sequence $(b_n)$ and $a_n > b_n$ such that $a_n / b_n \rightarrow \rho''(I)$, and thus by definition we must have $a_n/b_n \leqslant \rho(I,b_n)$ and $\rho''(I) \leqslant \rho_a'(I)$.

	Summarizing, we have seen that if we show that either $\rho(I)<h$ or $\rho''(I)=\rho_a'(I)<h$, then in fact for any positive integer $C$, the containment $I^{(hn-C)} \subseteq I^n$ holds for all $n$ sufficiently large.
	One could also try to bound the asymptotic resurgence, which is given by
	$$\rho_a(I) = \sup \left\lbrace \frac{m}{r} \, | \, I^{(mt)} \nsubseteq I^{rt} \textrm{ for all } t \gg 0 \right\rbrace$$	
	and was also first defined in \cite{AsymptoticResurgence}. While $\rho_a(I) \leqslant \rho(I)$ always holds, these two invariants can actually differ, as shown by the examples in \cite{DHNSST2015,ResurgenceKleinWiman} and \cite[Section 3]{MonAsymptoticResurgence}. In principle, showing that $\rho_a(I) < h$ might not be enough to answer Question \ref{question any C}, since there are a priori arbitrarily large $a$ and $b$ for which the value of $\rho_a(I)$ might not determine whether $I^{(a)} \subseteq I^b$. However, it has now been shown in \cite[2.11]{GrifoHunekeMukundan} (and also as a consequence of \cite[Proposition 2.6.]{DiPasqualeDrabkin}) that if $\rho_a(I)<h$, then Question \ref{question any C} has an affirmative answer.
	\end{remark}

The answer to Question \ref{question any C} would then be affirmative for all ideals $I$ as long as $\rho(I) < h$. We are not aware of any examples of ideals $I$ whose resurgence and big height coincide; the first natural place to look for such examples would be the known counterexamples to Harbourne's Conjecture \ref{Harbourne's Conjecture}.

\begin{example}
	The Fermat configurations \cite{counterexamples} are known counterexamples to $I^{(3)} \subseteq I^2$ and have resurgence $3/2$ \cite[Theorem 2.1]{DHNSST2015}. Therefore, the containment in Harbourne's Conjecture, $I^{(2n-1)} \subseteq I^n$, holds for all $n \neq 2$. We can also obtain, for example, that $I^{(2n-2)} \subseteq I^n$ holds for all $n > 4$. Note that while the lower bound of $3/2$ for the resurgence follows directly from $I^{(3)} \nsubseteq I^2$, the upper bound $3/2$ was obtained in \cite[Theorem 2.1]{DHNSST2015} without any explicit (non-)containment calculations.
\end{example}

There are other approaches to Question \ref{question any C} one could try; in the spirit of Theorem \ref{thm stable}, we ask the following refinement of Question \ref{question any C}:

\begin{question}\label{question given one any C}
	Let $I$ be a radical ideal of big height $h$ in a regular ring $R$. Fix an integer $C > 0$. If $I^{(hm-C)} \subseteq I^m$ holds for some $m$, does
	$$I^{(hn-C)} \subseteq I^n$$
	hold for all $n \geqslant m$?
\end{question} 

We will see that the answer to this question is yes under additional assumptions.

\begin{discussion}\label{remark n+h}
	Given containments such as $I^{(hn)} \subseteq I^n$, one may ask if $I^{(n+h)} \subseteq I I^{(n)}$ for all $n \gg 0$. This containment cannot hold for all $n \geqslant 1$ and all ideals $I$, since for $n = 1$ and $h = 2$ the statement is $I^{(3)} \subseteq I^2$, which is known to sometimes fail \cite{counterexamples}. Unfortunately, as we will see in Example \ref{Alexandra's Fermat example}, $I^{(n+h)} \subseteq I I^{(n)}$ might fail even for infinitely many values of $n$. 
	
	Nevertheless, if $I$ is such that $I^{(n+h)} \subseteq I I^{(n)}$ does hold for all $n \geqslant 1$, then not only does $I$ satisfy Harbourne's Conjecture \ref{Harbourne's Conjecture}, but the answer to Question \ref{question given one any C} is also affirmative.
	
	To see that Question \ref{question given one any C} has a positive answer under the assumption that $I^{(n+h)} \subseteq I I^{(n)}$ holds for all $n \geqslant 1$, fix $C$ and $m$ such that $I^{(hm-C)} \subseteq I^m$. Then for all $k \geqslant m$, by applying the formula $I^{(n+h)} \subseteq I I^{(n)}$ repeatedly we obtain
	$$I^{(hk-C)} = I^{(hm + h(k-m)-C)} \subseteq I I^{(hm - C + h(k-m-1))} \subseteq \cdots \subseteq I^{k-m} I^{(hm-C)}.$$
	By assumption, $I^{(hm-C)} \subseteq I^m$, and therefore
	$$I^{(hk-C)} \subseteq I^{k-m} I^{(hm-C)} \subseteq I^{k}.$$
	
	We have thus shown that if $I^{(n+h)} \subseteq I I^{(n)}$ does hold for all $n \geqslant 1$, then given $C$ and $m$ such that $I^{(hm-C)} \subseteq I^m$, the containment $I^{(hk-C)} \subseteq I^k$ holds for all $k \geqslant m$. Now suppose that $I$ is such that $I^{(n+h)} \subseteq I I^{(n)}$ holds for all $n \geqslant 1$. Then Harbourne's Conjecture \ref{Harbourne's Conjecture} also holds. Indeed, by the property we just showed, Harbourne's Conjecture \ref{Harbourne's Conjecture} follows by taking $C = h-1$ and $m = 1$ as long as we can show that $I^{(h+1)} \subseteq I^2$ does hold. But this is precisely what we obtain from taking $n=1$ in $I^{(n+h)} \subseteq I I^{(n)}$. 
\end{discussion}

What ideals $I$ do satisfy $I^{(n+h)} \subseteq I I^{(n)}$ for all $n \geqslant 1$? We will see that in characteristic $p$ this holds if the ring $R/I$ is F-pure. In particular, it holds for monomial ideals, ideals defining Veronese rings or determinantal rings, and other classes. Recall that a ring $A$ of prime characteristic $p$ is F-pure \cite{HoRo} if the Frobenius homomorphism $F(a) = a^p$ is pure, meaning that $F \otimes 1 : A \otimes_A A \longrightarrow A \otimes_A M$ is injective for all $A$-modules $M$.

The following result follows the same proof technique as \cite[Theorem 3.2]{GrifoHuneke}, which follows as its corollary; however, we will see that the result we present here has other consequences.

\begin{theorem}\label{thm Fpure stable}
	Let $R$ be a regular ring of characteristic $p>0$. Let $I$ be an ideal in $R$ such that $R/I$ is an F-pure ring, and let $h$ be the big height of $I$. Then for all $n \geqslant 1$,
	$$I^{(n+h)} \subseteq I I^{(n)}.$$
	In particular, if $I^{(hk-C)} \subseteq I^k$ for some $k$ and $C$, then $I^{(hn-C)} \subseteq I^n$ for all $n \geqslant k$.
\end{theorem}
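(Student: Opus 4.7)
The plan is to establish the main containment $I^{(n+h)} \subseteq I \cdot I^{(n)}$ for all $n \geqslant 1$; the ``in particular'' clause then follows by iteration, as in Discussion~\ref{remark n+h}. Namely, assuming $I^{(hk-C)} \subseteq I^k$ and applying the main containment with $n = hk - C$ yields $I^{(h(k+1)-C)} \subseteq I \cdot I^{(hk-C)} \subseteq I^{k+1}$, and induction gives $I^{(hn-C)} \subseteq I^n$ for all $n \geqslant k$.

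For the main step, the strategy would follow \cite[Theorem~3.2]{GrifoHuneke}, combining a generalized Hochster--Huneke pigeonhole containment with a Frobenius splitting provided by F-purity. First I would prove
\[ I^{(hq - h + 1 + a)} \subseteq I^{[q]} \cdot I^{(a)} \quad \text{for all } q = p^e, \ a \geqslant 0, \]
by localizing at each associated prime $P$ of $I$ of height $h_P \leqslant h$: in $R_P$ with regular parameters $x_1, \ldots, x_{h_P}$, any monomial of total degree at least $hq - h + 1 + a$ contains some $x_i^q$ by pigeonhole, with complementary factor of degree at least $a$. Taking $a = qn$ yields $I^{(q(n+h))} \subseteq I^{[q]} \cdot I^{(qn)}$. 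Given $x \in I^{(n+h)}$, additivity of the Frobenius in characteristic $p$ gives $x^q \in (I^{(n+h)})^{[q]} \subseteq I^{(q(n+h))} \subseteq I^{[q]} \cdot I^{(qn)}$ for every $q = p^e$.

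The next step is to write $x^q = \sum_j g_j^q b_j$ with $g_j \in I$ generators and $b_j \in I^{(qn)}$, and to take $q$-th roots in $R^{1/q}$ (additive in characteristic $p$) to obtain $x = \sum_j g_j b_j^{1/q}$ in $R^{1/q}$. Applying an F-splitting $\phi \colon R^{1/q} \to R$ provided by F-purity of $R/I$ (so $\phi(1) \equiv 1 \pmod{I}$ and $\phi(I^{1/q}) \subseteq I$) would give $\phi(x) = \sum_j g_j \phi(b_j^{1/q})$. The crucial claim is that an appropriate F-splitting---specifically the one arising from Fedder's criterion, defined via an element $u \in (I^{[q]} : I) \setminus \m^{[q]}$---satisfies $\phi(b_j^{1/q}) \in I^{(n)}$ for each $b_j \in I^{(qn)}$, so that $\phi(x) \in I \cdot I^{(n)}$. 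Combined with $\phi(x) - x = x\bigl(\phi(1) - 1\bigr) \in x \cdot I \subseteq I \cdot I^{(n+h)} \subseteq I \cdot I^{(n)}$, this forces $x \in I \cdot I^{(n)}$.

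The main obstacle I anticipate is verifying the symbolic-power compatibility $\phi\bigl((I^{(qn)})^{1/q}\bigr) \subseteq I^{(n)}$ of the Fedder-type F-splitting. The naive Frobenius trace loses a factor of $h-1$ in the symbolic exponent (yielding only $I^{(n-h+1)}$), so the specific Fedder-type splitting from F-purity is needed to recover the missing exponent; this local analysis at each associated prime of $I$ is the technical core of the argument, and is precisely where the F-pure hypothesis---as opposed to merely the Frobenius-closure of symbolic powers, which holds in any regular ring---is essential.
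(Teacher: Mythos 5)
There is a genuine gap at the very first step of your outline: the ``generalized pigeonhole'' containment $I^{(hq-h+1+a)} \subseteq I^{[q]}\, I^{(a)}$ is not established by localizing at the associated primes of $I$. A containment $J \subseteq K$ may be checked after localizing at the associated primes of $K$; the classical argument for $I^{(hq-h+1)} \subseteq I^{[q]}$ works precisely because flatness of Frobenius gives $\Ass(R/I^{[q]}) = \Ass(R/I)$, and the same holds for targets of the form $(I^{(n)})^{[q]}$. Your target $I^{[q]} I^{(a)}$ is a product, and such products typically have embedded primes not lying among the minimal primes of $I$, so checking at $\Ass(R/I)$ proves nothing: the identical reasoning would ``prove'' $I^{(3)} \subseteq I^2$ for every radical ideal of big height $2$ (locally at each minimal prime, $I_P$ is a complete intersection), which the Fermat configurations \cite{counterexamples} show is false. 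Moreover the asserted lemma is itself very strong --- already for $h=2$, $a=1$ it would give $I^{(2q)} \subseteq I^{[q]} I \subseteq I^{q+1}$ for all radical ideals of big height $2$ in characteristic $p$, which is not a known containment --- so it cannot simply be quoted. The second key step, the compatibility $\phi\bigl((I^{(qn)})^{1/q}\bigr) \subseteq I^{(n)}$ for the Fedder-type splitting, you explicitly leave unverified (``the main obstacle I anticipate''), so the technical core of your argument is missing. Note also that this claim is equivalent to $u\, I^{(qn)} \subseteq (I^{(n)})^{[q]}$ for the Fedder element $u \in (I^{[q]}:I) \setminus \m^{[q]}$, and the containment would hold for \emph{any} $u \in (I^{[q]}:I)$; F-purity enters only to guarantee $u \notin \m^{[q]}$, i.e.\ that $\phi$ is actually a splitting, so your diagnosis of where F-purity is essential is also off.

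The paper's proof avoids both issues. After reducing to the local case, it shows $(I^{[q]}:I) \subseteq (I I^{(n)} : I^{(n+h)})^{[q]}$ for all $q = p^e \gg 0$ and then applies Fedder's criterion: if $I^{(n+h)} \nsubseteq I I^{(n)}$, then $(I I^{(n)} : I^{(n+h)}) \subseteq \m$, hence $(I^{[q]}:I) \subseteq \m^{[q]}$, contradicting F-purity of $R/I$. The factor $I^{[q]}$ is manufactured not by a pigeonhole into a product but by multiplying by $s \in (I^{[q]}:I)$, namely $s (I^{(n+h)})^{[q]} \subseteq (sI)(I^{(n+h)})^{q-1} \subseteq I^{[q]} (I^{(n+h)})^{q-1}$, and the only symbolic-to-Frobenius input is $(I^{(n+h)})^{q-1} \subseteq I^{((n+h)(q-1))} \subseteq I^{(hq+(n-1)q-h+1)} \subseteq (I^{(n)})^{[q]}$ for $q \geqslant n+1$, by \cite[Lemma 2.6]{GrifoHuneke}; here the target is a Frobenius power of a symbolic power, whose associated primes are controlled, so no illegitimate localization is needed. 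Your iteration argument for the ``in particular'' clause is fine and agrees with Discussion \ref{remark n+h}, but as written the two central claims of your outline are unproven, and the first is unproven for a structural reason.
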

	
\begin{proof}
First, note that we can reduce to the local case. Indeed, the big height of an ideal does not increase under localization, and all localizations of an F-pure ring are F-pure \cite[6.2]{HoRo}; moreover, containments are local statements. So let $(R, \m)$ be a regular local ring.

Fix $n \geqslant 1$. As in \cite[Lemma 3.1]{GrifoHuneke}, we will first show that 
$$\left( I^{[q]} : I \right) \subseteq \left( I I^{(n)} : I^{(n+h)} \right)^{[q]}$$
for all $q = p^e \gg 0$. Once we establish this fact, if $I^{(n+h)} \nsubseteq I I^{(n)}$, then $\left( I I^{(n)} : I^{(n+h)} \right)^{[q]} \subseteq \m^{[q]}$, and thus $\left( I^{[q]} : I \right) \subseteq \mathfrak{m}^{[q]}$, so that by Fedder's Criterion \cite{Fedder} $R/I$ cannot be F-pure.

To show that 
$$\left( I^{[q]} : I \right) \subseteq \left( I I^{(n)} : I^{(n+h)} \right)^{[q]}$$
for all large $q$, consider $s \in \left( I^{[q]} : I \right)$. Then $s I^{(n+h)} \subseteq sI \subseteq I^{[q]}$, and thus
$$s \left( I^{(n+h)} \right)^{[q]} \subseteq \left( s I^{(n+h)} \right) \left( I^{(n+h)} \right)^{q-1} \subseteq I^{[q]} \left( I^{(n+h)} \right)^{q-1}.$$
We will show that
$$\left( I^{(n+h)} \right)^{q-1} \subseteq \left( I^{(n)} \right)^{[q]},$$
which implies that
$$s \left( I^{(n+h)} \right)^{[q]} \subseteq \left( I I^{(n)} \right)^{[q]}.$$

To do that, we will use \cite[Lemma 2.6]{GrifoHuneke}, which says that for all $q = p^e$ we have
$$I^{(hq+(n-1)q-h+1)} \subseteq \left( I^{(n)} \right)^{[q]}.$$

We claim that for all $q \gg 0$, $\left( I^{(n+h)} \right)^{q-1} \subseteq I^{(hq+(n-1)q-h+1)}$, which would conclude the proof that $\left( I^{(n+h)} \right)^{q-1} \subseteq \left( I^{(n)} \right)^{[q]}$.
To show that the claim, it is enough to prove that 
$$(n+h)(q-1) \geqslant hq+(n-1)q-h+1$$
for large values of $q$, and this inequality holds as long as $q \geqslant n+1$.
\end{proof}

On the other hand, the condition $I^{(n+h)} \subseteq I I^{(n)}$ for $n \gg 0$ is strictly stronger than the condition $I^{(hn-C)}~\subseteq~I^n$ for $n \gg 0$, as the following example by Alexandra Seceleanu shows.

\begin{example}[Seceleanu]\label{Alexandra's Fermat example}
	Consider $I = \left(x(y^3-z^3), y (z^3-x^3), z(x^3-y^3) \right) \subseteq \mathbb{C} [x,y,z]$, which defines a reduced set of $12$ points in $\mathbb{P}^2$ known as the Fermat configuration. This was the first counterexample found to $I^{(3)} \subseteq I^2$ \cite{counterexamples}. Writing $f = y^3-z^3$, $g = z^3-x^3$ and $h = x^3-y^3$, we have $H = fgh \in I^{(3)}$, and thus
	$$H^{n+1} \in \left( I^{(3)} \right)^{n+1} \subseteq I^{(3n+3)}.$$
	We will show, however, that $H^{n+1} \notin I I^{(3n+1)}$. To do this, we will compute the degree $9n+9$ part of $I I^{(3n+1)}$, following the same argument used in \cite[Example 3.8]{NguyenVu} to show that the minimal degree of an element in $I^{(3n+1)}$ is $9n+4$, and that 
	$$\left( I^{(3n+1)} \right)_{9n+4} = H^n I_4.$$
	In fact, it is shown in \cite[Example 3.8]{NguyenVu} that
	$$\left( I^{(3n+1)} \right)_{d} = H^n I_{d-9n}$$
	holds for all $d \leqslant 9n+4$, but note that the exact same argument given in \cite{NguyenVu} follows for $d \leqslant 12n + 3$. Now since $I$ is generated in degree $\geqslant 4$,
	$$\left( I I^{(3n+1)} \right)_{9n+9} = I_5 \left( I^{(3n+1)} \right)_{9n+4} + I_4 \left( I^{(3n+1)} \right)_{9n+5} = H^n I_4 I_5.$$
		If $H^{n+1} \in I I^{(3n+1)}$, it must be that $H \in I_4 I_5 \subseteq I^2$, which is false by \cite{counterexamples}.
		
	We have thus shown that $I^{(3n+3)} \nsubseteq I I^{(3n+1)}$ for all $n \geqslant 0$, so $I$ fails $I^{(k+2)} \subseteq I I^{(k)}$ for infinitely many $k$. In contrast, $I^{(2n-C)} \subseteq I^n$ for all $C>0$ and all $n \gg 0$, by Remark \ref{remark any C}.
\end{example}

\section{A special case in dimension $3$}\label{section dim 3 matrix conditions}

We now shift gears from large values to small values, and focus on the  containment $I^{(3)} \subseteq I^2$. Following \cite{Seceleanu}, we will study this question in dimension $3$ for height $2$ quasi-homogeneous ideals generated by the $2 \times 2$ minors of a $2 \times 3$ matrix. This class of ideals is of interest because it includes includes ideals that satisfy $I^{(3)} \subseteq I^2$, and others that fail this containment. In fact, this class contains prime ideals such as those defining space monomial curves $(t^a, t^b, t^c)$ \cite{Herzog1970}, but also known counterexamples to $I^{(3)} \subseteq I^2$ such as the Fermat configurations \cite{counterexamples}. All the results in this section are extensions of the results in \cite{Seceleanu}, with small adjustments in the proofs when necessary. In Section \ref{section space monomial curves} we will apply the results in this section to the case of space monomial curves.

Let us fix some notation to be used through the remainder of the paper. Let $k$ be a field, $R = k \llbracket x,y,z \rrbracket$ or $R = k [x,y,z]$, and $\m = (x,y,z)$. Let $I = I_2(M)$ be the ideal in $R$ generated by the $2 \times 2$ minors of
$$M = \begin{pmatrix} a_1 & a_2 & a_3 \\ b_1 & b_2 & b_3 \end{pmatrix}$$
for some $a_1, a_2, a_3, b_1, b_2, b_3 \in R$. If $R = k[x,y,z]$, assume that $I$ is quasi-homogeneous.
Write
	$$f_1 = a_2b_3-a_3b_2, \qquad f_2 = a_3b_1-a_1b_3 \quad \textrm{ and } \quad f_3 = a_1b_2 - a_2b_1,$$
so that $I = \left( f_1, f_2, f_3 \right)$.

\begin{proposition}[Proposition 3.1 in \cite{Seceleanu}]\label{proposition ext}
	Consider integers $a > b$, and let $\iota$ be the natural inclusion map $I^a \subseteq I^b$. The following are equivalent:
	\begin{enumerate}[(a)]
		\item $I^{(a)} \subseteq I^b$;
		\item The map $\Ext^{2}_R(\iota): \Ext^{2}_R \left( I^b, R \right) \longrightarrow \Ext^{2}_R \left( I^a, R \right)$ is the zero map.
	\end{enumerate}
\end{proposition}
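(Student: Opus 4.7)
The plan is to convert the containment $I^{(a)} \subseteq I^b$ into a vanishing statement for local cohomology, and then apply local duality together with the standard shift of Ext to land on $\Ext^2_R(I^\bullet, R)$. Since both conditions localize cleanly at the maximal ideal $\m$ (in the polynomial ring case, $I$ is quasi-homogeneous, so we may replace $R$ by the localization $R_\m$ or its completion), I would work in the three-dimensional regular local ring setting throughout.

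First I would show that $I^{(a)}/I^a = \H^0_\m(R/I^a)$. The point is that in a three-dimensional regular local ring, every minimal prime of $I$ has height $2$, so the only associated prime of $I^a$ that is not a minimal prime of $I$ can be the maximal ideal $\m$ itself. Hence the primary decomposition of $I^a$ differs from that of $I^{(a)}$ only in an $\m$-primary component, and the quotient $I^{(a)}/I^a$ is precisely the $\m$-torsion of $R/I^a$. Using that $I^a \subseteq I^b$, the containment $I^{(a)} \subseteq I^b$ is then equivalent to the natural map
\[
\H^0_\m(R/I^a)\ \longrightarrow\ \H^0_\m(R/I^b)
\]
induced by the surjection $R/I^a \twoheadrightarrow R/I^b$ being zero.

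Next I would apply local duality. Since $R$ is a three-dimensional regular local ring, Matlis duality gives $\H^0_\m(M)^{\vee} \cong \Ext^3_R(M, R)$ for any finitely generated $R$-module $M$, and this identification is natural in $M$. Because the modules $\H^0_\m(R/I^a)$ and $\H^0_\m(R/I^b)$ have finite length, dualizing is faithful on the map between them, so the vanishing of the map above is equivalent to the vanishing of
\[
\Ext^3_R(R/I^b, R)\ \longrightarrow\ \Ext^3_R(R/I^a, R).
\]
Finally, the short exact sequence $0 \to I^a \to R \to R/I^a \to 0$ and the vanishing $\Ext^i_R(R, R)=0$ for $i\geq 1$ give a connecting isomorphism $\Ext^2_R(I^a, R) \cong \Ext^3_R(R/I^a, R)$, naturally in $a$. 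Combining these isomorphisms for $a$ and $b$ with the square induced by $\iota : I^a \hookrightarrow I^b$ (equivalently $R/I^a \to R/I^b$), the vanishing of $\Ext^2_R(\iota)$ is equivalent to the vanishing of the $\Ext^3$ map, hence to $I^{(a)} \subseteq I^b$.

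The main thing to get right will be Step 1, namely pinning down that the only possible embedded prime of $I^a$ is $\m$; this relies essentially on $\dim R = 3$ and $\Ht I = 2$, and on either passing to the local ring $R_\m$ or using that $I$ is quasi-homogeneous so that associated primes are homogeneous. The rest is purely formal: local duality and the long exact $\Ext$-sequence, with naturality used to ensure all identifications commute with the maps induced by $\iota$.
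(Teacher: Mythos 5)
Your proposal is correct and takes essentially the same route as the paper (and the cited source): identify $I^{(a)}/I^a$ with $\H^0_\m(R/I^a)$ using that in this dimension-$3$, height-$2$ setting the only possible embedded prime of a power of $I$ is $\m$, so saturation equals symbolic power, and then apply local (Matlis) duality together with the connecting isomorphisms $\Ext^2_R(I^a,R)\cong\Ext^3_R(R/I^a,R)$ — precisely the Gorenstein duality argument sketched in the remark following the proposition. No gaps to report.
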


\begin{remark}
	In general, for a Gorenstein local ring $(R, \m)$ of dimension $d$, $\left( I^a \right)^{\textrm{sat}} \subseteq I^b$ if and only if $\Ext^{d-1}_R(\iota) = 0$, where $J^{\textrm{sat}} = \left( J : \m^\infty \right)$ denotes the saturation of $J$ with respect to $\m$. Our conditions give us control over the embedded primes of the powers of $I$; more precisely, they guarantee that $\m$ is the only possible embedded prime of any power of $I$, and the same would follow for homogeneous saturated ideals of height $d-1$ in any regular local ring of dimension $d$. In all these cases, $\left( I^a \right)^{\textrm{sat}} = I^{(a)}$ for all $a$. However, dimension $3$ is the only circumstance where we can apply this technique to all (quasihomogeneous) radical ideals of height $2$.
\end{remark}

Seceleanu \cite{Seceleanu} proceeds by computing this Ext map explicitly when $a=3$ and $b=2$. Her method also works for any $a$ and $b$. To do this, one extracts minimal free resolutions for all powers of $I$ from a resolution of the Rees algebra of $I$, which is the graded algebra
	$$\mathcal{R}(I) = \bigoplus_{n \geqslant 0} I^n t^n \subseteq R[t],$$
	whose degree $n$ piece is isomorphic to $I^n$. We can view the Rees algebra of $I$ as a quotient of the polynomial ring $S = R[T_1, T_2, T_3]$, where elements in $R$ have degree $0$ and each $T_i$ has degree $1$, via the graded map $S \longrightarrow \ra(I)$ given by $T_i \mapsto f_it$. Via this map, we can write $\ra(I) \cong S/L$, where
	$$L = \left( F(T_1, T_2, T_3 ) \in R \left[ T_1, T_2, T_3 \right] \,\, \big| \,\, F(f_1, f_2, f_3) = 0 \right).$$

	Recall that the symmetric algebra of $I$ is the quotient of the tensor algebra $\bigoplus_{n \geqslant 0} I^{\otimes n}$ by the ideal generated by the simple tensors of the form $u \otimes v - v \otimes u$. The symmetric algebra of $I$ can also be written as a quotient of $S$, as $\sym(I) \cong S/L_1$ with
	$$L_1 = \left( a_1 T_1 + a_2 T_2 + a_3 T_3 \,\,\, \bigg| \,\, a_1 f_1 + a_2 f_3 + a_3 f_3 = 0 \, \right).$$
	Notice that $L_1$ coincides with the ideal generated by the degree $1$ part of $L$ above. Moreover, the multiplication map on $I \otimes I$ induces a surjective graded map $\sym(I) \rightarrow \ra(I)$. We say that an ideal $I$ is \emph{of linear type} if the map $\sym(I) \rightarrow \ra(I)$ is an isomorphism.
	
\begin{lemma}[Lemma 2.1 and Lemma 2.2 in \cite{Seceleanu}]\label{resolution of the rees algebra}	

	The ideal $I$ is of linear type, and the Rees algebra of $I$ is a complete intersection over $S$, with minimal free resolution given by
	$$\xymatrix{0 \ar[r] & S(-2) \ar[r]^-{\begin{bmatrix} F \\ G \end{bmatrix}} & S(-1) \oplus S(-1) \ar[rr]^-{\begin{bmatrix} G & -F \end{bmatrix}} && S \ar[r] & \ra(I) \ar[r] & 0},$$
	where
	\begin{align*}
		F = a_1 T_1 + a_2 T_2 + a_3 T_3 & &\textrm{and} && & G = b_1 T_1 + b_2 T_2 + b_3 T_3.
	\end{align*}
\end{lemma}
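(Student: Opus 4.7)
The plan is to exhibit $F = a_1 T_1 + a_2 T_2 + a_3 T_3$ and $G = b_1 T_1 + b_2 T_2 + b_3 T_3$ as a complete set of defining relations for both $\sym(I)$ and $\ra(I)$, and then to recognize the displayed complex as the Koszul complex on the regular pair $(F, G)$.

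First, direct computation gives
$$a_1 f_1 + a_2 f_2 + a_3 f_3 = \det \begin{pmatrix} a_1 & a_2 & a_3 \\ a_1 & a_2 & a_3 \\ b_1 & b_2 & b_3 \end{pmatrix} = 0$$
and, similarly, $b_1 f_1 + b_2 f_2 + b_3 f_3 = 0$; hence $F, G \in L$. The Hilbert--Burch theorem applied to $0 \to R^2 \xrightarrow{M^T} R^3 \to I \to 0$ (valid since $I = I_2(M)$ is a perfect ideal of grade two under the standing hypothesis $\Ht(I) = 2$) shows that the rows of $M$ generate all linear syzygies of $(f_1, f_2, f_3)$, so $L_1 = (F, G)$ and $\sym(I) \cong S/(F, G)$.

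Second, I would verify that $F, G$ forms a regular sequence in $S = R[T_1, T_2, T_3]$. Since $S$ is Cohen--Macaulay, this reduces to the height bound $\Ht(F, G) \geq 2$: the form $F$ is a nonzerodivisor because not all $a_i$ vanish (otherwise the first row of $M$ would be zero and $I = 0$, so $\Ass(S) = \{(0)\}$ cannot contain $F$), and an analogous argument modulo $F$ handles $G$, using that any $R$-linear dependence between the two rows of $M$ at a minimal prime of $R$ would annihilate every minor $f_i$ and contradict $\Ht(I) = 2$. The Koszul complex on $(F, G)$ is then exactly the displayed resolution, and $S/(F, G)$ is Cohen--Macaulay of dimension $\dim S - 2 = \dim R + 1 = \dim \ra(I)$.

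Finally, to upgrade the surjection $\pi : S/(F, G) \twoheadrightarrow \ra(I) = S/L$ to an isomorphism (thereby establishing linear type), I would combine three ingredients: $S/(F, G)$ is Cohen--Macaulay and hence has no embedded primes; $\ra(I) \subset R[t]$ is a domain of the same Krull dimension; and the induced map on generic fibers $K(R)[T_1, T_2, T_3]/(F, G) \twoheadrightarrow K(R)[t]$ is a surjection between one-dimensional domains (here $\Ht(I) = 2$ enters as the linear independence of the two rows of $M$ over $K(R)$), hence an isomorphism. Thus $\ker \pi$ is a submodule of $S/(F, G)$ that vanishes generically; since $S/(F, G)$ has no embedded primes, $\ker \pi = 0$. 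The main obstacle I expect is precisely this last identification $L = (F, G)$: ruling out higher-degree relations in $L$ uses the height-$2$ hypothesis on $I$ essentially, and without it even small examples can fail to be of linear type. Once linear type is established, the Koszul resolution and the complete-intersection presentation of $\ra(I)$ over $S$ are immediate.
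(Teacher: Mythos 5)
Your first two steps coincide with the paper's: Hilbert--Burch gives $L_1=(F,G)$, and linear independence of the rows of $M$ (forced by $\Ht I=2$) makes $F,G$ a regular sequence, so the displayed complex is the Koszul resolution of $S/(F,G)$. The divergence, and the gap, is in your final step, which replaces the paper's citation of Huneke's theorem (ideals generated by $d$-sequences are of linear type) by a torsion argument. From the generic-fiber isomorphism you only learn that $\ker\pi$ is $R$-torsion; to conclude $\ker\pi=0$ from ``$S/(F,G)$ has no embedded primes'' you would need every associated prime of $S/(F,G)$ to contract to $(0)$ in $R$. Since $S/(F,G)$ is Cohen--Macaulay, its associated primes are the height-two minimal primes of $(F,G)$, and such a prime can be of the form $\p S$ for a height-two prime $\p \subseteq R$ containing all six entries of $M$; nothing in $\Ht I_2(M)=2$ rules this out. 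Concretely, take $M=\begin{pmatrix} y & x & 0 \\ 0 & y & x \end{pmatrix}$, so $I=(x^2,xy,y^2)$ is homogeneous of height two, $F=yT_1+xT_2$, $G=yT_2+xT_3$ is a regular sequence, yet $T_2^2-T_1T_3\in L$ while every element of $(F,G)$ has coefficients in $(x,y)$; so $I$ is not of linear type, and $\ker\pi\neq 0$ is exactly $R$-torsion supported on the minimal prime $(x,y)S$ of $(F,G)$. Thus the inference ``generically zero plus no embedded primes implies zero'' fails precisely at the step you yourself flagged as the main obstacle, and it cannot be repaired using only the linear independence of the rows over $K(R)$.

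What is missing is an input beyond $\Ht I=2$. One sufficient fix for your route is to show that no prime of height at most two contains $I_1(M)$ (equivalently $\Ht I_1(M)\geq 3$): then a height-two minimal prime $Q$ of $(F,G)$ with $\p=Q\cap R\neq 0$ is impossible, since for $\Ht\p=1$ the rows of $M$ stay independent modulo $\p$ (as $I\nsubseteq\p$) and force $\Ht Q\geq 3$, while for $\Ht\p=2$ one would get $Q=\p S\supseteq(F,G)$, i.e.\ $I_1(M)\subseteq\p$. For the ideals this paper actually cares about this extra fact is available --- if $I$ is radical and $\p$ is a minimal (height-two) prime, then $I_\p=\p R_\p$, so $I_1(M)\subseteq\p$ would give $\p R_\p\subseteq\p^2R_\p$, contradicting Nakayama --- but you neither state nor use it. The paper sidesteps the issue by importing the $d$-sequence/linear-type theorem, which encodes exactly this kind of local generation condition; as written, your self-contained argument proves a statement that is false for some height-two ideals $I_2(M)$, so the additional hypothesis (or the citation) is not optional.
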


\begin{proof}
	By \cite{HunekeReesAlgdsequence}, ideals generated by $d$-sequences are of linear type, and $I$ is generated by a $d$-sequence. Our assumption that $I$ has height $2$ implies in particular that the two rows of $M$ are linearly independent, so that $F$, $G$ is a regular sequence in $S$. To show that the Rees algebra of $I$ is a complete intersection, all we need to see is that the kernel $L = L_1$ of $T_i \mapsto f_i t$ is the ideal $(F,G)$. By the Hilbert--Burch Theorem, a minimal free resolution of $I$ is given by
	$$\xymatrix@C=14mm{0 \ar[r] & R^2 \ar[r]^-{M^T} & R^3 \ar[r]^-{\left[f_1 \, f_2 \, f_3 \right]} & I \ar[r] & 0}.$$
	In particular, any relation between the generators of $I$ is an $R$-linear combination of the relations $(a_1, a_2, a_3)$ and $(b_1, b_2, b_3)$, so that $F$ and $G$ minimally generate $L$.\qedhere
\end{proof}

\begin{discussion}[Resolutions for all powers]\label{discussion}
	Since $\ra(I)_n = I^n t^n$, we can now extract resolutions for all powers of $I$ by taking the degree $n$ strand of the resolution for $\ra(I)$ over $S$, as in \cite[Proposition 2.3]{Seceleanu}. Note that $S_n$ is the free $R$-module generated by all $n+2 \choose 2$ monomials of degree $n$ in $T_1$, $T_2$, and $T_3$, so we will identify $S_n$ with $R^{n+2 \choose 2}$. Thus $I^n$ has a free resolution over $R$ given by 
$$\xymatrix{0 \ar[r] & R^{n \choose 2} \ar[r]^-{\varphi(n)} & R^{n+1 \choose 2} \oplus R^{n+1 \choose 2} \ar[r]^-{\psi(n)} & R^{n+2 \choose 2} \ar[r] & I^n \ar[r] & 0}$$
where $\varphi(n)$ is the map induced by multiplication by $F$ on the first copy of $R^{n+1 \choose 2}$ and by multiplication by $G$ on the second copy of $R^{n+1 \choose 2}$. 

More precisely, let $\alpha$ and $\beta$ be generators of the free $S$-module in homological degree $1$ in the minimal free resolution
$$\xymatrix{0 \ar[r] & S(-2) \ar[r]^-\varphi & S(-1) \oplus S(-1) \ar[r]^-\psi & S \ar[r] & \ra(I) \ar[r] & 0},$$
and let $\gamma$ be a generator of the free $S$-module in homological degree $2$. With this notation, $T_1^iT_2^jT_3^k\gamma$, where $i+j+k=n-2$, will denote a generator in homological degree $2$ in
$$\xymatrix{0 \ar[r] & R^{n \choose 2} \ar[r]^-{\varphi(n)} & R^{n+1 \choose 2} \oplus R^{n+1 \choose 2} \ar[r]^-{\psi(n)} & R^{n+2 \choose 2} \ar[r] & I^n \ar[r] & 0},$$
while $T_1^iT_2^jT_3^k\alpha$ and $T_1^iT_2^jT_3^k\beta$, where $i+j+k=n-1$, denote generators in homological degree $1$. With this notation, given $i+j+k=n-2$, the map $\varphi(n)$ takes $T_1^iT_2^jT_3^k\gamma$ to
$$\left( a_1 T_1^{i+1}T_2^jT_3^k + a_2 T_1^iT_2^{j+1}T_3^k + a_3 T_1^iT_2^jT_3^{k+1} \right)\alpha + \left( b_1 T_1^{i+1}T_2^jT_3^k + b_2 T_1^iT_2^{j+1}T_3^k + b_3 T_1^iT_2^jT_3^{k+1}\right)\beta.$$
If we represent $\varphi(n)$ as a matrix with $n-2$ columns and $2(n-1)$ rows, the entry in the column corresponding to $T_1^iT_2^jT_3^k\gamma$, where $i+j+k=n-2$, and the row corresponding to $T_1^uT_2^wT_3^v\delta$, where $u+w+v=n-1$ and $\delta =\alpha$ or $\delta = \beta$, is
\begin{itemize}
	\item $a_1$ if $\delta=\alpha$, $u=i+1$, $w=j$ and $v=k$;
	\item $a_2$ if $\delta=\alpha$, $u=i$, $w=j+1$ and $v=k$;
	\item $a_3$ if $\delta=\alpha$, $u=i$, $w=j$ and $v=k+1$;
	\item $b_1$ if $\delta=\beta$, $u=i+1$, $w=j$ and $v=k$;
	\item $b_2$ if $\delta=\beta$, $u=i$, $w=j+1$ and $v=k$;
	\item $b_3$ if $\delta=\beta$, $u=i$, $w=j$ and $v=k+1$.
\end{itemize}
\end{discussion}
	
	\begin{example}
	We can now recover the matrix $\varphi(3)^T$ that appears in \cite{Seceleanu}: 
	$$\begin{blockarray}{lcccccccccccc} 
	& \blue{T_1^2\alpha} & \blue{T_1T_2\alpha} & \blue{T_1T_3\alpha} & \blue{T_2^2\alpha} & \blue{T_2T_3\alpha} & \blue{T_3^2\alpha} & \blue{T_1^2\beta} & \blue{T_1T_2\beta} & \blue{T_1T_3\beta} & \blue{T_2^2\beta} & \blue{T_2T_3\beta} & \blue{T_3^2\beta} \\
	\begin{block}{l(cccccccccccc)}
	\blue{T_1\gamma} & a_1 & a_2 & a_3 & 0 & 0 & 0 & b_1 & b_2 & b_3 & 0 & 0 & 0 \\ 
	\blue{T_2\gamma} & 0 & a_1 & 0 & a_2 & a_3 & 0 & 0 & b_1 & 0 & b_2 & b_3 & 0 \\ 
	\blue{T_3\gamma} & 0 & 0 & a_1 & 0 & a_2 & a_3 & 0 & 0 & b_1 & 0 & b_2 & b_3 \\
	\end{block}
	\end{blockarray}.$$
	\end{example}
	
	\begin{remark}
	There are various other ways to obtain these minimal free resolutions of each $I^n$, such as \cite{Weyman,SymbPowersCodim2}. These resolutions also correspond to the Z-complex in the approximation complex construction \cite{HerzogSimisVasconcelosI,HerzogSimisVasconcelosII}, which was pointed out to the author by Vivek Mukundan.
	The advantage of the method we use here, following Seceleanu, is that we will also be able to easily extract lifts of the maps $I^{n+1} \subseteq I^n$, which we will then use to compute $\Ext(I^{a} \subseteq I^b, R)$ for all $a > b$.
	\end{remark}

	We want to compute $\Ext(I^{a} \subseteq I^b, R)$, so it remains to find lifts for all inclusion maps $I^n \subseteq I^{n-1}$ that are compatible with these resolutions. Consider the additive maps on $S$ given by
	\begin{align*}
D_1 (T_1^i T_2^j T_3^k) = 
	\left\lbrace \begin{array}{ll} T_1^{i-1} T_2^j T_3^k & \textrm{if } i \geqslant 1 \\ 0 & \textrm{otherwise}
	\end{array}\right. \\
	D_2 (T_1^i T_2^j T_3^k) = 
	\left\lbrace \begin{array}{ll} T_1^{i} T_2^{j-1} T_3^k & \textrm{if } j \geqslant 1 \\ 0 & \textrm{otherwise}
	\end{array}\right.  \\
	D_3 (T_1^i T_2^j T_3^k) = 
	\left\lbrace \begin{array}{ll} T_1^{i} T_2^j T_3^{k-1} & \textrm{if } k \geqslant 1 \\ 0 & \textrm{otherwise}
	\end{array}\right. .		
	\end{align*}

\begin{lemma}[Proposition 3.2 in \cite{Seceleanu}]
	The operator 
	$$D = f_1 D_1 + f_2 D_2 + f_3 D_3$$
	on $S$ induces the degree $-1$ map on $\ra(I)$ that takes an homogeneous element $gt^n \in I^nt^n$ to $3gt^{n-1} \in I^{n-1} t^{n-1}$. 
	
	Moreover, writing $D_n$ to represent the map $S_n \longrightarrow S_{n-1}$ induced by $D$, the following is a commutative diagram with exact rows:
	$$\xymatrix{0 \ar[r] & R^{n-1 \choose 2} \ar[r]^-{\varphi(n-1)} & R^{n \choose 2} \oplus R^{n \choose 2} \ar[r]^-{\psi(n-1)} & R^{n+1 \choose 2} \ar[r] & I^{n-1} \ar[r] & 0\\
	0 \ar[r] & R^{n \choose 2} \ar[u]^-{D_{n-2}} \ar[r]^-{\varphi(n)} & R^{n+1 \choose 2} \oplus R^{n+1 \choose 2} \ar[u]_-{D_{n-1}} \ar[r]^-{\psi(n)} & R^{n+2 \choose 2} \ar[u]_-{D_n} \ar[r] & I^n \ar[u]_-{3 \, \iota} \ar[r] & 0}$$
\end{lemma}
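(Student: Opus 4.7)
The plan is to prove the lemma in three steps, following the structure of Seceleanu's argument.

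First, I would verify that $D$ descends to an $R$-linear map on $\ra(I) = S/L$. Since $D$ is $R$-linear by construction and $L = (F, G)$ by Lemma~\ref{resolution of the rees algebra}, it suffices to check that $D(TF), D(TG) \in L$ for every monomial $T$ in $R[T_1, T_2, T_3]$. The base cases
$$D(F) = a_1 f_1 + a_2 f_2 + a_3 f_3, \qquad D(G) = b_1 f_1 + b_2 f_2 + b_3 f_3$$
both vanish, since each is the expansion of the determinant of a $3\times 3$ matrix obtained by repeating a row of $M$. For a general monomial $T$, $D(TF)$ is computed by distributing over the three $T_\ell$-summands of $F$ and simplifying, and the resulting expression can be absorbed into $L$ using the degree-$1$ syzygies $a_m G - b_m F \in L$ for $m = 1, 2, 3$, which one checks to equal $\pm(f_p T_q - f_q T_p)$ for the appropriate indices.

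Second, I would identify the induced map on $\ra(I)$. On a monomial $T_1^iT_2^jT_3^k$ of total $T$-degree $n$, $D$ produces the sum $f_1 T_1^{i-1}T_2^jT_3^k + f_2 T_1^iT_2^{j-1}T_3^k + f_3 T_1^iT_2^jT_3^{k-1}$, with any summand whose corresponding exponent would become negative taken to be zero. Under the Rees surjection $S \twoheadrightarrow \ra(I)$, each nonzero summand evaluates to $f_1^i f_2^j f_3^k\, t^{n-1}$, so when $i, j, k$ are all positive we immediately recover the claimed $3 g t^{n-1}$. When some exponent vanishes, the relations $f_p T_q \equiv f_q T_p \pmod{L}$ from the first step identify the apparently missing terms with ones already present, yielding the same uniform factor.

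Third, I would verify commutativity of each of the three squares. Since every square involves $R$-linear maps between free $R$-modules, it suffices to check commutativity on the basis vectors $T_1^iT_2^jT_3^k \gamma$, $T_1^iT_2^jT_3^k \alpha$, and $T_1^iT_2^jT_3^k \beta$. The rightmost square is precisely the content of the second step. For the middle and leftmost squares, expanding each composition using the explicit formulas in Discussion~\ref{discussion} for $\varphi(n)$ and $\psi(n)$ yields the same linear combinations of basis elements of the target, weighted by products of the form $a_i f_j$ or $b_i f_j$. Exactness of the rows was already established in Discussion~\ref{discussion}.

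The hard part is the second step: the factor of $3$ emerges transparently for ``interior'' monomials in which all exponents are positive, but for ``boundary'' monomials in which at least one exponent is zero, one must argue carefully modulo the syzygies of $L$ to account for the missing contributions. Once the induced map on $\ra(I)$ is correctly identified, the chain-map commutativity in the third step reduces to routine bookkeeping.
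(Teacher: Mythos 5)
Your outline follows the same skeleton as the paper's argument (check $D$ against $F$ and $G$, identify the induced map via monomials $T_1^iT_2^jT_3^k$, then verify the squares entrywise), and you are right that the boundary monomials --- those with some exponent equal to zero --- are the crux. But the mechanism you propose for handling them does not work, and the steps as written would fail there. Concretely, take $P=T_1$: since $D_2$ and $D_3$ kill $T_1^2$ and the mixed terms lose a summand, one computes $D(F\cdot T_1)=f_1\left(a_2T_2+a_3T_3\right)=f_1F-a_1f_1T_1$, whereas $F\cdot D(T_1)=f_1F$; more generally, for a monomial $P$ one gets $D(F\cdot P)-F\cdot D(P)=\bigl(\sum_{\ell\,:\,\deg_{T_\ell}P=0}a_\ell f_\ell\bigr)P$, which vanishes only when no exponent, or all three exponents, of $P$ are zero. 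In particular the image of $D(F\cdot T_1)$ under $T_i\mapsto f_it$ is $f_1\left(a_2f_2+a_3f_3\right)t=-a_1f_1^2t$, which is nonzero whenever $a_1f_1\neq 0$; so $D(F\cdot T_1)$ is \emph{not} in $L$, and your Step 1 cannot be completed: the Koszul elements $a_mG-b_mF=\pm\left(f_pT_q-f_qT_p\right)$ do lie in $L$, as you say, but adding elements of $L$ can never absorb something whose class in $S/L$ is nonzero. The same issue undercuts Step 2: $D(T_1^n)=f_1T_1^{n-1}$ maps to $f_1^n t^{n-1}$, so the factor produced on a boundary monomial is the number of strictly positive exponents, not $3$, and the relations $f_pT_q\equiv f_qT_p$ only change representatives --- they cannot change this class.

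Step 3 inherits the problem in a sharper form: commutativity of the displayed squares is an equality of maps of free $R$-modules, so ``arguing modulo the syzygies of $L$'' is not available there, and the identities you need --- $D(FP)=F\,D(P)$, $D(GP)=G\,D(P)$, and (for the rightmost square) that the image of $D_n(P)$ in $I^{n-1}$ equals $3$ times the image of $P$ --- fail on the nose exactly at boundary monomials, e.g.\ the image of $D_n(T_1^n)$ is $f_1^n$, not $3f_1^n$. For comparison, the paper's own displayed verification is precisely the interior computation (it tacitly assumes $i,j,k\geqslant 1$) and does not treat the boundary terms separately, so your instinct about where the difficulty sits is correct; but closing it is not routine bookkeeping. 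What does work on the nose is the operator $f_1\frac{\partial}{\partial T_1}+f_2\frac{\partial}{\partial T_2}+f_3\frac{\partial}{\partial T_3}$ of Remark \ref{remark char 3}: since the $\frac{\partial}{\partial T_i}$ are derivations and $\sum_i a_if_i=\sum_i b_if_i=0$, it commutes with multiplication by $F$ and $G$ for \emph{all} monomials and lifts $n\cdot\iota$ rather than $3\,\iota$. Alternatively, one would have to supplement the interior computation by showing that the boundary discrepancies land in $\im H_n$ and therefore do not affect the induced map on $\Ext^2$, in the spirit of the swaps used in the proof of Theorem \ref{matrix main result}. Your proposal does neither, so as written it does not establish the lemma.
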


\begin{proof}
Since 
$$D(F)=a_1f_1 + a_2 f_2 + a_3 f_3 = 0 = b_1 f_1 + b_2 f_2 + b_3 f_3 = D(G),$$
$D$ induces a map on $S/(F,G) \cong \ra(I)$. To check that the induced map is as claimed, it is enough to show the claim for elements of the form $gt^n = f_1^if_2^jf_3^kt^n$, where $i+j+k = n$. Note that such an element is the image of $T_1^iT_2^jT_3^k$ via the surjection $S \longrightarrow \ra(I)$, and that
$$D \left( T_1^iT_2^jT_3^k \right) = f_1T_1^{i-1}T_2^jT_3^k + f_2T_1^iT_2^{j-1}T_3^k + f_3 T_1^iT_2^jT_3^{k-1}.$$
Now the map $S \longrightarrow \ra(I)$ takes this element to
$$\left( f_1f_1^{i-1}f_2^jf_3^k + f_2f_1^if_2^{j-1}f_3^k + f_3 f_1^if_2^jf_3^{k-1} \right) t^{n-1} = 3gt^{n-1}.$$
	To check that the diagram commutes, we need to check that $D$ commutes with the maps $[F \, G]$ and $[G \, -F]^T$ in the minimal free resolution of $\ra(I)$. And indeed,
	\begin{align*}
		D \left( F \cdot T_1^i T_2^j T_3^k \right) & = D \left( a_1 T_1^{i+1} T_2^j T_3^k + a_2 T_1^i T_2^{j+1} T_3^k + a_3 T_1^i T_2^j T_3^{k + 1} \right) \\
		& =  a_1 \left( f_1 T_1^{i} T_2^j T_3^k + f_2 T_1^{i+1} T_2^{j-1} T_3^k + f_3 T_1^{i+1} T_2^{j} T_3^{k-1} \right) \\
		& + a_2 \left( f_1 T_1^{i-1} T_2^{j+1} T_3^{k} + f_2 T_1^i T_2^{j} T_3^k + f_3 T_1^{i+1} T_2^{j} T_3^{k-1} \right) \\
		& + a_3 \left( f_1 T_1^{i-1} T_2^{j} T_3^{k+1} + f_2 T_1^i T_2^{j-1} T_3^{k +1} + f_3 T_1^{i} T_2^{j} T_3^{k} \right) \\
		& = a_1 T_1 \left( f_1 T_1^{i-1} T_2^j T_3^k + f_2 T_1^{i} T_2^{j-1} T_3^k + f_3 T_1^{i} T_2^{j} T_3^{k-1} \right) \\
		& + a_2 T_2 \left( f_1 T_1^{i-1} T_2^{j} T_3^{k} + f_2 T_1^i T_2^{j-1} T_3^k + f_3 T_1^{i+1} T_2^{j-1} T_3^{k-1} \right) \\
		& + a_3 T_3 \left( f_1 T_1^{i-1} T_2^{j} T_3^{k} + f_2 T_1^i T_2^{j-1} T_3^{k} + f_3 T_1^{i} T_2^{j} T_3^{k-1} \right) \\
		& = F \cdot \left( f_1 T_1^{i-1} T_2^{j} T_3^{k} + f_2 T_1^i T_2^{j-1} T_3^k + f_3 T_1^{i+1} T_2^{j-1} T_3^{k-1} \right) \\ 
		& = F \cdot D \left( T_1^{i} T_2^{j} T_3^{k} \right).
	\end{align*}
\end{proof}

Given any $n > m$, we compose successive commutative diagrams of this type, and obtain
	
	$$\xymatrix{0 \ar[r] & R^{m \choose 2} \ar[r]^-{\varphi(m)} & R^{m+1 \choose 2} \oplus R^{m+1 \choose 2} \ar[r]^-{\psi(m)} & R^{m+2 \choose 2} \ar[r] & I^{m} \ar[r] & 0\\
	0 \ar[r] & R^{n \choose 2} \ar[u]_-{D_{n,m}} \ar[r]_-{\varphi(n)} & R^{n+1 \choose 2} \oplus R^{n+1 \choose 2} \ar[u] \ar[r]_-{\psi(n)} & R^{n+2 \choose 2} \ar[u] \ar[r] & I^n \ar[u]_-{3 \iota} \ar[r] & 0}$$
	where $D_{n,m} := D_{n-2} D_{n-3} \cdots D_{m} D_{m-1}$ and $\iota$ denotes the inclusion map $I^n \subseteq I^m$.

	\begin{discussion}
	To write down the map $D_{n,m}\!: R^{n \choose 2} \rightarrow R^{m \choose 2} $, we think of each copy of $R$ in $R^{n \choose 2}$ as corresponding to one of the monomials $T_1^iT_2^jT_3^k$ of degree $i+j+k= n-2$; the set of all such monomials is a basis for $R[T_1,T_2,T_3]_{n-2}$. Note that the maps $D_1$, $D_2$ and $D_3$ commute with each other, and thus we can rewrite $D_{n,m}$ as
	$$D_{n,m} = ( f_1 D_1 + f_2 D_2 + f_3 D_3)^{n-m} = \sum_{u+v+w=n-m}  \frac{(n-m)!}{u!v!w!} \, f_1^{u} f_2^v f_3^w  D_1^{u} D_2^v D_3^w.$$
	The coefficients count all the ways to order $u$ copies of $D_1$, $v$ copies of $D_2$ and $w$ copies of $D_3$.
		We conclude that
		$$D_{n,m} \left( T_1^i T_2^j T_3^k \right) = \sum_{u+v+w = n-m} \frac{(n-m)!}{u!v!w!} \, f_1^{u} f_2^v f_3^w T_1^{i-u} T_2^{j-v} T_3^{k-w}.$$
		Finally, $V_{n,m}$ is simply the transpose of this map. Given $i+j+k = m$,  $V_{n,m}(T_1^i T_2^v T_3^w)$ is a combination of $T_1^{i+u} T_2^{j+v} T_3^{k+w}$ for each $u+v+w = n-m$, with coefficients as above, i.e.,
		$$V_{n,m}(T_1^i T_2^v T_3^w) = \sum_{u+v+w = n-m} \frac{(n-m)!}{u!v!w!} \, f_1^{u} f_2^v f_3^w T_1^{i+u} T_2^{j+v} T_3^{k+w}.$$
	\end{discussion}
	
	\begin{corollary}[Proposition 3.2 in \cite{Seceleanu}]\label{maps lemma 3}
	Let $n > m$ and suppose that $\ch k \neq 3$. Then $I^{(n)} \subseteq I^m$ if and only if the columns of $V_{n,m} := \left( D_{n,m} \right)^T$ are contained in the image of $H_n = \left(\varphi(n)\right)^T$.
	\end{corollary}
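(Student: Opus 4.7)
The plan is to translate the containment $I^{(n)} \subseteq I^m$ into an explicit linear algebra condition via \Cref{proposition ext} together with the resolutions and chain-level lifts already assembled. By \Cref{proposition ext}, $I^{(n)} \subseteq I^m$ holds if and only if the induced map $\Ext^2_R(\iota)\colon \Ext^2_R(I^m,R) \to \Ext^2_R(I^n,R)$ is the zero map, where $\iota\colon I^n \hookrightarrow I^m$ denotes the inclusion. So the strategy is to compute this $\Ext$ map explicitly from the chain-level data already at hand and rephrase its vanishing as a membership condition.

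First I would dualize the truncated minimal free resolution of $I^n$ from \Cref{discussion}. Applying $\Hom_R(-,R)$ identifies
$$\Ext^2_R(I^n,R) \;\cong\; R^{n \choose 2}/\im(H_n),$$
where $H_n = \varphi(n)^T$, and analogously $\Ext^2_R(I^m,R) \cong R^{m \choose 2}/\im(H_m)$. Consequently $\Ext^2_R(\iota)$ is the map on these cokernels induced by dualizing any chain-level lift of $\iota$.

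Next I would produce such a lift by iterating the commutative square built from the operator $D$: composing the lifts of $3\iota\colon I^k \to I^{k-1}$ for $k = m+1, \ldots, n$ yields a chain map lifting $3^{n-m}\iota$ whose homological-degree-$2$ component is precisely $D_{n,m}$. Dualizing gives the map $V_{n,m} = (D_{n,m})^T$ between the free modules computing $\Ext^2$, and hence the map $3^{n-m}\Ext^2_R(\iota)$ on cokernels. This induced map vanishes if and only if every column of $V_{n,m}$ lies in $\im(H_n)$.

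Finally, the hypothesis $\ch k \ne 3$ ensures that $3^{n-m}$ is a unit in $R$, so the vanishing of $3^{n-m}\Ext^2_R(\iota)$ is equivalent to the vanishing of $\Ext^2_R(\iota)$ itself, giving the desired equivalence. The only delicate point is the bookkeeping of the factor $3^{n-m}$ accumulated by iterating the lift provided by $D$ — this is precisely the feature of the construction that forces the restriction on the characteristic of $k$.
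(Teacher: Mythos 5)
Your proposal is correct and follows essentially the same route as the paper (and Seceleanu's Proposition 3.2): apply \Cref{proposition ext}, identify $\Ext^2_R(I^n,R)$ with $\CoKer(H_n)$ via the resolutions of \Cref{discussion}, use the iterated $D$-lift $D_{n,m}$ of $3^{n-m}\iota$ so that its transpose $V_{n,m}$ induces $3^{n-m}\Ext^2_R(\iota)$ on cokernels, and invoke $\ch k \neq 3$ to remove the factor $3^{n-m}$. The bookkeeping of that factor is exactly the point the paper relies on, so nothing is missing.
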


	The following provides a convenient way to simplify computations.

	\begin{theorem}\label{matrix main result}
	Let $n > m \geqslant 1$, and suppose that $\ch k \neq 3$. Then $I^{(n)} \subseteq I^m$ if and only if for each $i$, $j$, and $k$ such that $i+j+k = m-2$, there exist a choice of $u$, $v$ and $w$ such that $u+v+w = n-m$ and
	$$f_1^u f_2^v f_3^w T_1^{i+u}T_2^{j+v}T_3^{k+w}$$
	is an element of the ideal generated by
	$$\left\lbrace \begin{array}{c} a_1 T_1^{d-1} T_2^{e} T_3^f + a_2 T_1^{d} T_2^{e-1} T_3^f + a_3 T_1^{d} T_2^{e} T_3^{f-1}, \\ b_1 T_1^{d-1} T_2^{e} T_3^f + b_2 T_1^{d} T_2^{e-1} T_3^f + b_3 T_1^{d} T_2^{e} T_3^{f-1} \end{array}  \right\rbrace_{d+e+f = n-1}.$$
\end{theorem}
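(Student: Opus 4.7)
My approach starts from Corollary \ref{maps lemma 3}, which reduces $I^{(n)}\subseteq I^m$ to showing that for every $(i,j,k)$ with $i+j+k=m-2$, the vector
$$V_{n,m}(T_1^iT_2^jT_3^k)=\sum_{u+v+w=n-m}\frac{(n-m)!}{u!\,v!\,w!}\,f_1^uf_2^vf_3^w\,T_1^{i+u}T_2^{j+v}T_3^{k+w}$$
lies in the image of $H_n$. The image of $H_n$ is the $R$-submodule of the degree $n-2$ piece of $R[T_1,T_2,T_3]$ generated, as $(d,e,f)$ ranges over triples with $d+e+f=n-1$, by
\begin{align*}
F_{d,e,f}&=a_1T_1^{d-1}T_2^eT_3^f+a_2T_1^dT_2^{e-1}T_3^f+a_3T_1^dT_2^eT_3^{f-1},\\
G_{d,e,f}&=b_1T_1^{d-1}T_2^eT_3^f+b_2T_1^dT_2^{e-1}T_3^f+b_3T_1^dT_2^eT_3^{f-1};
\end{align*}
these are exactly the two families appearing in the statement. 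The plan is therefore to show that $V_{n,m}(T_1^iT_2^jT_3^k)$ is congruent modulo $\operatorname{image}(H_n)$ to a scalar multiple of any single summand $f_1^uf_2^vf_3^w\,T_1^{i+u}T_2^{j+v}T_3^{k+w}$.

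The core step is a trading identity. Fix $(d,e,f)$ with $d+e+f=n-1$ and compute, using $f_1=a_2b_3-a_3b_2$, $f_2=a_3b_1-a_1b_3$, and $f_3=a_1b_2-a_2b_1$, the three combinations $b_3F_{d,e,f}-a_3G_{d,e,f}$, $b_1F_{d,e,f}-a_1G_{d,e,f}$, and $b_2F_{d,e,f}-a_2G_{d,e,f}$. Each combination cancels exactly one monomial term and produces a two-term relation, and together they yield the congruences
$$f_1T_1\cdot N\equiv f_2T_2\cdot N\equiv f_3T_3\cdot N\pmod{\operatorname{image}(H_n)}$$
for every $N=T_1^pT_2^qT_3^r$ with $p+q+r=n-3$, and then by $R$-linearity for every element of the degree $n-3$ piece of $R[T_1,T_2,T_3]$. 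Given any two triples $(u,v,w)$ and $(u',v',w')$ with common sum $n-m$, I can now trade one index at a time: if $v\geq 1$, factoring $T_2$ out of the summand writes it as $f_2T_2\cdot N$ with $N=f_1^uf_2^{v-1}f_3^w\,T_1^{i+u}T_2^{j+v-1}T_3^{k+w}$, and the first congruence replaces it by the summand with indices $(u+1,v-1,w)$. A finite sequence of such moves connects $(u,v,w)$ to $(u',v',w')$, so all summands of $V_{n,m}(T_1^iT_2^jT_3^k)$ are congruent modulo $\operatorname{image}(H_n)$.

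Since the multinomial coefficients satisfy $\sum_{u+v+w=n-m}\frac{(n-m)!}{u!\,v!\,w!}=3^{n-m}$, which is a unit in $R$ under the assumption $\operatorname{char} k\neq 3$, I obtain
$$V_{n,m}(T_1^iT_2^jT_3^k)\equiv 3^{n-m}\,f_1^uf_2^vf_3^w\,T_1^{i+u}T_2^{j+v}T_3^{k+w}\pmod{\operatorname{image}(H_n)}$$
for any choice of $(u,v,w)$ with $u+v+w=n-m$. Consequently $V_{n,m}(T_1^iT_2^jT_3^k)\in\operatorname{image}(H_n)$ if and only if some (equivalently, every) summand $f_1^uf_2^vf_3^w\,T_1^{i+u}T_2^{j+v}T_3^{k+w}$ lies in $\operatorname{image}(H_n)$, and Corollary \ref{maps lemma 3} then gives the theorem. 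I expect the main obstacle to be the trading step: the three $2\times 2$-minor combinations must cancel the expected monomial and produce the sign-consistent cyclic relations $f_1T_1N\equiv f_2T_2N\equiv f_3T_3N\equiv f_1T_1N$, rather than collapsing to $f_iT_iN\equiv 0$; everything afterwards is bookkeeping.
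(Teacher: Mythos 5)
Your proof is correct and follows essentially the same approach as the paper: both start from Corollary \ref{maps lemma 3}, both establish the key congruences $f_1T_1N \equiv f_2T_2N \equiv f_3T_3N \pmod{\operatorname{image}(H_n)}$ by taking the $2\times 2$ minor combinations $b_cF_{d,e,f}-a_cG_{d,e,f}$ of columns of $H_n$, both trade indices one step at a time and induct to connect arbitrary $(u,v,w)$ and $(u',v',w')$, and both close with the multinomial identity $\sum\frac{(n-m)!}{u!v!w!}=3^{n-m}$ and the invertibility of $3$. Your explicit identification of the three combinations and verification of sign-consistency around the cycle is exactly the computation the paper records for the case $c=1$, $d=2$ and defers "by similarity" for the others.
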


\begin{proof}
	For fixed $i$, $j$ and $k$, we will show that given two choices of $u+v+w = u'+v'+w'=n-m$, $f_1^u f_2^v f_3^w T_1^{i+u}T_2^{j+v}T_3^{k+w}$ and $f_1^{u'} f_2^{v'} f_3^w T_1^{i+u'}T_2^{j+v'}T_3^{k+w'}$ differ by an element in the image of $H_n$. The theorem will follow from this claim. To see that, note first that the sum of the coefficients appearing in $V_{n,m}(T_1^iT_2^jT_3^k)$ is
	$$\sum_{u+v+w = n-m} \frac{(n-m)!}{u!v!w!} = (1+1+1)^{n-m} = 3^{n-m},$$
	As a consequence, the difference between
	$$3^{n-m} f_1^u f_2^v f_3^w T_1^{i+u}T_2^{j+v}T_3^{k+w}$$
	and
	$$V_{n,m} \left( T_1^iT_2^jT_3^k \right) = \sum_{u+v+w = n-m} \frac{(n-m)!}{u!v!w!} \, f_1^{u} f_2^v f_3^w T_1^{i+u} T_2^{j+v} T_3^{k+w}$$
	is in the image of $H_n$. Since $I^{(n)} \subseteq I^m$ is equivalent to $\im V_{n,m} \subseteq \im H_n$, the theorem will follow once we prove the claim above. Note that we can drop the $3^{n-m}$ coefficient because $3$ is invertible by assumption.
	
	To show our claim, we will first see that given any monomial $P(T)$ in $T_1$, $T_2$, and $T_3$ of degree $n-3$,
	$$\left( f_c T_c - f_dT_d \right) P(T) \gamma \in \im H_n$$
	for any $c \neq d$ with $c, d \in \left\lbrace 1, 2, 3 \right\rbrace$. We will show this for $c=1$ and $d=2$, noting that the proof is similar for other values. If $P(T) = T_1^iT_2^jT_3^k$, then
	$$b_3 \left( a_1 T_1^{i} T_2^{k+1} T_3^j + a_2 T_1^{i+1} T_2^{k} T_3^j + a_3 T_1^{i+1} T_2^{k+1} T_3^{j-1} \right)$$
	$$ - a_3 \left( b_1 T_1^{i} T_2^{k+1} T_3^j + b_2 T_1^{i+1} T_2^{k} T_3^j + b_3 T_1^{i+1} T_2^{k+1} T_3^{j-1} \right).$$
	$$= -f_2 T_1^{i} T_2^{k+1} T_3^j + f_1 T_1^{i+1} T_2^{k} T_3^j + 0 \, T_1^{i+1} T_2^{k+1} T_3^{j-1}$$
	$$= \left( -f_2 T_2 + f_1 T_1 \right)P(T).$$
	
	Now given two choices $u+v+w = u'+v'+w' = n-m$, we claimed that
		$$f_1^u f_2^v f_3^w T_1^{i+u}T_2^{j+v}T_3^{k+w} - f_1^{u'} f_2^{v'} f_3^{w'} T_1^{i+u'}T_2^{j+v'}T_3^{k+w'} \in \im H_n.$$
	Let us first prove this when $u'=u+1$, $v'=v-1$, and $w'=w$. We want to show that
	$$f_1^u f_2^v f_3^w T_1^{i+u}T_2^{j+v}T_3^{k+w} - f_1^{u+1} f_2^{v-1} f_3^{w} T_1^{i+u+1}T_2^{j+v-1}T_3^{k+w} \in \im H_n.$$
	And indeed, this can be rewritten as
	$$f_1^u f_2^{v-1} f_3^w \left( f_2 T_1^{i+u}T_2^{j+v}T_3^{k+w} - f_1 T_1^{i+u+1}T_2^{j+v-1}T_3^{k+w} \right) \in \im H_n.$$
	Similar computations give the remaining cases if we switch the roles of $u$, $v$ and $w$. Now an inductive argument shows the claim holds when $|u-u'|>1$, by simply taking successive sums of differences of this sort.
	\end{proof}

\begin{remark}
	Theorem \ref{matrix main result} says that in order to show that $I^{(n)} \subseteq I^m$, it is enough to show that for each column of $V_{n, m}$, the vector obtained by substituting all of the nonzero entries but one by $0$ is in the image of $H_n$. This simplifies our calculations considerably if we want to prove that specific containments hold.
\end{remark}

Applying this to the containment $I^{(3)} \subseteq I^2$, we obtain the following extension of \cite[Theorem 3.3]{Seceleanu}:

\begin{theorem}\label{main result for 23}
	Suppose that $\ch k \neq 3$. The containment $I^{(3)} \subseteq I^2$ holds if and only if one (equivalently, all) of the following vectors
	$$\begin{bmatrix} f_1 \\ f_2 \\ f_3 \end{bmatrix}, \qquad \begin{bmatrix} f_1 \\ 0 \\ 0 \end{bmatrix}, \qquad \begin{bmatrix} 0 \\ f_2 \\ 0 \end{bmatrix}, \quad\textrm{ and } \quad \begin{bmatrix} 0 \\ 0 \\ f_3 \end{bmatrix}
$$
are in the image of 
$$\begin{pmatrix}
	a_1 & a_2 & a_3 & 0 & 0 & 0 & b_1 & b_2 & b_3 & 0 & 0 & 0 \\ 
	0 & a_1 & 0 & a_2 & a_3 & 0 & 0 & b_1 & 0 & b_2 & b_3 & 0 \\ 
	0 & 0 & a_1 & 0 & a_2 & a_3 & 0 & 0 & b_1 & 0 & b_2 & b_3
	\end{pmatrix}.$$
\end{theorem}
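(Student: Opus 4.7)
The plan is to specialize Theorem \ref{matrix main result} (or equivalently Corollary \ref{maps lemma 3}) to the case $n = 3$, $m = 2$. Since $m - 2 = 0$, the only triple $(i,j,k)$ with $i+j+k = m-2$ is $(0,0,0)$, and since $n - m = 1$, the triples $(u,v,w)$ with $u+v+w = n-m$ are exactly $(1,0,0)$, $(0,1,0)$, and $(0,0,1)$. Substituting these into $f_1^u f_2^v f_3^w T_1^{u} T_2^{v} T_3^{w}$ yields $f_1 T_1$, $f_2 T_2$, and $f_3 T_3$, which, upon identifying $R^{3 \choose 2} = R^3$ with the free $R$-module on $T_1 \gamma, T_2 \gamma, T_3 \gamma$, become the three ``pure'' column vectors in the statement.

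Next I would write out $H_3 = \varphi(3)^T$ explicitly; this has already been done in the Example above, and the resulting $3 \times 12$ matrix is precisely the one displayed in the theorem, with rows indexed by $T_1 \gamma, T_2 \gamma, T_3 \gamma$ and columns indexed by $T_1^2 \alpha, T_1 T_2 \alpha, \ldots, T_3^2 \beta$. By Theorem \ref{matrix main result} applied to $n = 3$, $m = 2$, the containment $I^{(3)} \subseteq I^2$ holds if and only if at least one of these three vectors lies in the image of $H_3$.

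To upgrade this to the full four-way equivalence, I would invoke the key computation inside the proof of Theorem \ref{matrix main result}: for any degree-$(n-3)$ monomial $P(T)$ and any $c \neq d$, the element $(f_c T_c - f_d T_d) P(T) \gamma$ lies in $\im H_n$. In our situation $n = 3$, so we may take $P(T) = 1$, giving $f_c T_c - f_d T_d \in \im H_3$ for every pair $c \neq d$. Hence $f_1 T_1$, $f_2 T_2$, and $f_3 T_3$ are pairwise congruent modulo $\im H_3$, and their sum $f_1 T_1 + f_2 T_2 + f_3 T_3$ (which corresponds to the vector $(f_1, f_2, f_3)^T$) is congruent to $3 f_i T_i$ for any single $i$. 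Since $\ch k \neq 3$ we may invert $3$, so the fully loaded vector $(f_1, f_2, f_3)^T$ belongs to $\im H_3$ if and only if any one of the three pure vectors does.

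The main obstacle is essentially bookkeeping: correctly identifying the unique column of $V_{3,2}$ and tracing its entries through the indexing conventions of Discussion \ref{discussion}. No genuine difficulty arises beyond that, since the heavy lifting has been done in Corollary \ref{maps lemma 3} and Theorem \ref{matrix main result}; the hypothesis $\ch k \neq 3$ is needed precisely to invert the factor of $3$ produced by the operator $D$, both in the application of Corollary \ref{maps lemma 3} and in the final comparison between the summed vector and the three pure vectors.
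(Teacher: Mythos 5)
Your proposal is correct and takes essentially the same route as the paper, which obtains this statement as a direct specialization of Theorem \ref{matrix main result} (with $n=3$, $m=2$) together with Corollary \ref{maps lemma 3}, using the matrix $\varphi(3)^T$ already displayed in the example. Your appeal to the congruences $f_cT_c \equiv f_dT_d \pmod{\im H_3}$ from inside the proof of Theorem \ref{matrix main result} is exactly what is needed to justify the ``one (equivalently, all)'' clause, with $\ch k \neq 3$ used to invert the factor of $3$.
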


\begin{example}
	Suppose we want to show that $I^{(5)} \subseteq I^3$. That would require proving that
	$$\begin{bmatrix} f_1^2 \\ 2f_1f_2 \\ 2f_1f_3 \\ f_2^2 \\ 2f_2f_3 \\ f_3^2 \\ 0 \\ 0 \\ 0 \\ 0 \end{bmatrix}, \qquad \begin{bmatrix} 0 \\ f_1^2 \\ 0 \\ 2f_1f_2 \\ 2f_1f_3 \\ 0 \\ f_2^2 \\ 2f_2f_3 \\ f_3^2 \\ 0 \end{bmatrix},\qquad \begin{bmatrix} 0 \\ 0 \\ f_1^2 \\ 0 \\ 2f_1f_2 \\ 2f_1f_3 \\ 0 \\ f_2^2 \\ 2f_2f_3 \\ f_3^2 \end{bmatrix} \qquad \in \im H_{5}.$$
	Theorem \ref{matrix main result} says that this is in fact equivalent to showing, for instance, that $$\begin{bmatrix} f_1^2 \\ 0 \\ 0 \\ 0 \\ 0 \\ 0 \\ 0 \\ 0 \\ 0 \\ 0 \end{bmatrix}^T, \begin{bmatrix} 0 \\ 0 \\ 0 \\ f_1f_2 \\ 0 \\ 0 \\ 0 \\ 0 \\ 0 \\ 0 \end{bmatrix}^T, \begin{bmatrix} 0 \\ 0 \\ 0 \\ 0 \\ 0 \\ 0 \\ 0 \\ f_2^2 \\ 0 \\ 0 \end{bmatrix}^T \in \im H_5.$$
\end{example}

\begin{remark}\label{remark char 3}
	If we wanted to use this technique in characteristic $3$, we could replace our choice of map $D$ by
	$$f_1 \frac{\partial}{\partial T_1} + f_2 \frac{\partial}{\partial T_2} + f_3 \frac{\partial}{\partial T_3}.$$
	The distinction is that this map will give us lifts of $n \cdot \left( I^{n} \subseteq I^{n-1} \right)$, and instead of excluding characteristic $3$, we will have to exclude all characteristics dividing $\frac{n!}{m!}$ when using this map to study $I^{(n)} \subseteq I^m$. For appropriate choices of $n$ and $m$, this will allow us to say something about $I^{(n)} \subseteq I^m$ in characteristic $3$. In particular, Theorem \ref{matrix main result} also holds for this new choice of lifts. The proof is very similar to the one we present here, the only difference being that the sum of the coefficients is $\frac{n!}{m!}$ instead of $3^{n-m}$. Unfortunately, none of these approaches apply to studying the containment $I^{(3)} \subseteq I^2$ in characteristic $3$.
		
	In fact, this work was originally done \cite[Chapter 3]{mythesis} using the map $f_1 \frac{\partial}{\partial T_1} + f_2 \frac{\partial}{\partial T_2} + f_3 \frac{\partial}{\partial T_3}$. Vincent G\'elinas first suggested to the author to consider the map $D$ we use here instead; conversations with Alexandra Seceleanu were also crucial to make this change.
\end{remark}

\vspace{2em}

In the next and final section, we will show sufficient conditions to imply $I^{(3)} \subseteq I^2$, $I^{(4)} \subseteq I^3$, and $I^{(5)} \subseteq I^3$. For that, we will exhibit explicit solutions to the linear equations we need to solve, by writing combinations of columns of $H_n := \varphi(n)^T$. For the reader's convenience, each column used will be tagged as above, with a monomial $T_1^iT_2^jT_3^k$ of degree $n-1$, but we will drop $\alpha$ and $\beta$ from our notation. A column tagged with $T_1^i T_2^j T_3^k$ has non-zero entries in rows $T_1^{i-1} T_2^j T_3^j$ (if $i>0$), $T_1^{i} T_2^{j-1} T_3^j$ (if $j>0$) and $T_1^{i} T_2^j T_3^{k-1}$ (if $k>0$), as described in Discussion \ref{discussion}.

\vskip 5mm

\section{Space monomial curves}\label{section space monomial curves}

Let $a, b, c$ be positive integers. We will write $P(a,b,c)$ to denote the kernel the map $R = k \llbracket x,y,z \rrbracket \longrightarrow k \llbracket t \rrbracket$ or $R = k [ x,y,z ] \longrightarrow k [ t ]$ defined by $x \mapsto t^a$, $y \mapsto t^b$ and $z \mapsto t^c$. By \cite{Herzog1970}, $P(a,b,c) = I_2(M)$, where $M$ is a matrix of the form
$$M = \begin{pmatrix} x^{\alpha_3} & y^{\beta_1} & z^{\gamma_2} \\ z^{\gamma_1} & x^{\alpha_2} & y^{\beta_3} \end{pmatrix}.$$
This is a rich class of prime ideals, whose symbolic powers exhibit different kinds of behavior. In particular, their symbolic Rees algebras
$$\bigoplus_{n \geqslant 0} I^n t^n \subseteq R[t]$$
are not always finitely generated \cite{NonnoetherianSymb}. Even when they are finitely generated, the symbolic Rees algebra may be generated up to various degrees \cite{MonCurvesGen2,NoethSymbReesAlgDegrees,Degree4SpaceMonCurvesI,Degree4SpaceMonCurvesII}. For some choices of $a, b, c$, the ideal $P(a,b,c)$ is a complete intersection, and thus the containment problem is not very interesting, since $P^{(n)} = P^n$ for all $n \geqslant 1$. The containment problem becomes interesting when $P(a,b,c)$ is not a complete intersection: in that case, we even have $P^{(n)} \neq P^n$ for all $n \geqslant 1$, by \cite{Huneke1986}.

The main theorem in this section is a positive answer to Huneke's Question for these primes:

\begin{theorem}\label{thm space monomial curves}
	Let $\ch k \neq 3$. If $P=P(a,b,c)$ for any $a, b, c$, then $P^{(3)} \subseteq P^2$.
\end{theorem}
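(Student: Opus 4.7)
The plan is to apply Theorem \ref{main result for 23}, which reduces the problem to showing that one of the vectors $(f_1,0,0)^T$, $(0,f_2,0)^T$, $(0,0,f_3)^T$ lies in the image of $H_3$. By Herzog's theorem \cite{Herzog1970}, we have $a_1 = x^{\alpha_3}$, $a_2 = y^{\beta_1}$, $a_3 = z^{\gamma_2}$, $b_1 = z^{\gamma_1}$, $b_2 = x^{\alpha_2}$, $b_3 = y^{\beta_3}$, so in particular $f_1 = y^{\beta_1+\beta_3} - x^{\alpha_2}z^{\gamma_2}$. The strategy is to target $(f_1, 0, 0)^T$ by producing each of the two monomial terms of $f_1$ separately as elements of the image.

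First, I would verify that the three-column combination
$$y^{\beta_3}\,T_1T_2\alpha \;+\; x^{\alpha_2}\,T_1T_3\alpha \;-\; x^{\alpha_3}\,T_2T_3\beta$$
produces $(y^{\beta_1+\beta_3} + x^{\alpha_2}z^{\gamma_2}, 0, 0)^T$; the cancellations in rows 2 and 3 come from the symmetric pairings $x^{\alpha_3}y^{\beta_3} - x^{\alpha_3}y^{\beta_3} = 0$ and $x^{\alpha_2+\alpha_3} - x^{\alpha_2+\alpha_3} = 0$. Next, the two mixed pairs $y^{\beta_1}\,T_1T_2\alpha - x^{\alpha_3}\,T_2^2\alpha$ and $y^{\beta_3}\,T_1T_3\beta - z^{\gamma_1}\,T_3^2\beta$ yield $(y^{2\beta_1},0,0)^T$ and $(y^{2\beta_3},0,0)^T$ respectively. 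Since one of $\beta_1 \leq \beta_3$ or $\beta_3 \leq \beta_1$ always holds, the monomial $y^{\beta_1+\beta_3}$ lies in $(y^{2\beta_1}, y^{2\beta_3})$, so $(y^{\beta_1+\beta_3}, 0, 0)^T$ is in the image. Subtracting this from the first construction puts $(x^{\alpha_2}z^{\gamma_2}, 0, 0)^T$ in the image, and finally $(f_1, 0, 0)^T$ is the difference of the last two vectors.

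The main obstacle is discovering the three-column combination above. The natural "cyclic" combinations among the six off-diagonal columns $T_iT_j\alpha$, $T_iT_j\beta$ (of the shape $b_k\,T_iT_j\alpha - a_k\,T_iT_j\beta$) yield only the three syzygy vectors $(f_1, -f_2, 0)^T$, $(0, f_2, -f_3)^T$, $(-f_1, 0, f_3)^T$, whose sum is zero; consequently one cannot isolate $(f_1, 0, 0)^T$ from off-diagonal columns alone, and the breakthrough is to involve a diagonal column such as $T_2^2\alpha$ or $T_3^2\beta$ in order to break the cyclic symmetry and access a pure monomial like $y^{2\beta_1}$ in the first coordinate. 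Once that structural observation is in hand, everything else reduces to routine monomial bookkeeping using the Herzog form and elementary divisibility.
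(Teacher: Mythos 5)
Your proof is correct, and the combinations you exhibit do land in the image of $H_3$: with the columns taken as $T_1T_2\alpha = (a_2,a_1,0)^T$, $T_1T_3\alpha = (a_3,0,a_1)^T$, $T_2T_3\beta = (0,b_3,b_2)^T$, $T_2^2\alpha = (0,a_2,0)^T$, $T_1T_3\beta = (b_3,0,b_1)^T$, $T_3^2\beta = (0,0,b_3)^T$, the three identities check out, and the elementary fact that $\min(2\beta_1, 2\beta_3) \leqslant \beta_1 + \beta_3$ closes the gap. You take a genuinely different route from the paper. The paper first isolates a general sufficient criterion valid for any $2\times 3$ matrix (Theorem~\ref{sufficient condition for 23}: $a_1 \mid b_2 a_3$ implies $I^{(3)} \subseteq I^2$ for $\operatorname{char} k \neq 2,3$), and the key combination there uses the column $T_1^2\alpha$ weighted by $-2c$ with $b_2a_3 = ca_1$; one then observes that after a possible row/column permutation the Herzog matrix has $a_1 \mid b_2$, with characteristic $2$ handled separately via Hochster--Huneke. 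You instead exploit a different matched pair of entries in the Herzog form: rather than the $x$-powers $a_1 = x^{\alpha_3}$, $b_2 = x^{\alpha_2}$, you use the $y$-powers $a_2 = y^{\beta_1}$, $b_3 = y^{\beta_3}$ and the diagonal columns $T_2^2\alpha$, $T_3^2\beta$ to build $(y^{2\beta_1},0,0)^T$ and $(y^{2\beta_3},0,0)^T$. This has two pleasant side effects: your coefficients are all monomials or $\pm 1$, so characteristic $2$ needs no separate argument, and no row/column permutation is required. What the paper's route buys in exchange is a criterion ($a_1\mid b_2a_3$) that applies to arbitrary $2\times3$ minor ideals beyond space monomial curves. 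Your structural remark that the three off-diagonal ``cyclic'' combinations sum to zero, forcing the use of a diagonal column, is a correct and useful observation about why the naive approach fails.
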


When $\ch k = 2$, this is a simple consequence of \cite{comparison}. Otherwise, Theorem \ref{thm space monomial curves} is a corollary of the following more general criterion:

\begin{theorem}\label{sufficient condition for 23}
	Suppose that $\ch k \neq 2, 3$. If $a_1| b_2a_3$, then $I^{(3)} \subseteq I^2$.
\end{theorem}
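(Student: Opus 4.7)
The plan is to apply Theorem~\ref{main result for 23}, which (since $\ch k \neq 3$) reduces $I^{(3)} \subseteq I^2$ to exhibiting one of the four listed vectors inside $\im H_3 \subseteq R[T_1,T_2,T_3]_1$. I would aim at $(f_1,0,0)$ and construct it as an explicit $R$-linear combination of the twelve columns of $H_3$, using the hypothesis to write $b_2 a_3 = a_1 \lambda$ for some $\lambda \in R$.

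The key observation is that the hypothesis provides a second realization of the troublesome term $a_3 b_2$ in the first coordinate, namely $(a_3 b_2, 0, 0) = \lambda \cdot T_1^2\alpha \in \im H_3$, via a column that carries zeros in rows $2$ and $3$. Since $f_1 = a_2 b_3 - a_3 b_2$, this reduces the task to producing $(a_2 b_3, 0, 0) \in \im H_3$, after which $(f_1,0,0)$ is one subtraction away.

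To produce $(a_2 b_3, 0, 0)$, I would bootstrap by clearing one nonzero coordinate at a time. First, $a_3 \cdot T_1 T_2\beta - \lambda \cdot T_1^2\alpha$ has the first-coordinate cancellation $a_3 b_2 - a_1 \lambda = 0$ and so equals $(0, a_3 b_1, 0)$. Subtracting this from $b_1 \cdot T_2 T_3\alpha = (0, a_3 b_1, a_2 b_1)$ yields $(0, 0, a_2 b_1)$, and subtracting the result from $a_2 \cdot T_1 T_3\beta = (a_2 b_3, 0, a_2 b_1)$ yields $(a_2 b_3, 0, 0)$. A final subtraction of $\lambda \cdot T_1^2\alpha$ gives $(f_1, 0, 0)$. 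Unfolded, the combination is
\[
-\,2\lambda\, T_1^2\alpha \;+\; a_3\, T_1 T_2\beta \;-\; b_1\, T_2 T_3\alpha \;+\; a_2\, T_1 T_3\beta,
\]
whose value I would verify coordinate-by-coordinate: rows $2$ and $3$ cancel in pairs, and row $1$ collects $-2a_3 b_2 + a_3 b_2 + a_2 b_3 = a_2 b_3 - a_3 b_2 = f_1$.

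The hard part is locating this bootstrap sequence. Without the hypothesis, none of the mixed columns $T_i T_j \alpha$, $T_i T_j \beta$ can contribute an $f_1$-like entry in row $1$ without leaving a residue in row $2$ or row $3$, and the three ``diagonal'' vectors $(f_1,0,0)$, $(0,f_2,0)$, $(0,0,f_3)$ are tied together by a Koszul-type identity that blocks obtaining any one from combinations of the other two. The hypothesis $a_1 \mid b_2 a_3$ breaks the symmetry precisely by providing the ``extra'' representation $\lambda T_1^2\alpha = (a_3 b_2, 0, 0)$ of the obstructing term, which is exactly what makes the bootstrap go through; Theorem~\ref{main result for 23} then delivers $I^{(3)} \subseteq I^2$.
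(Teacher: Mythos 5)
Your proposal is correct and produces exactly the same linear combination of columns of $H_3$ that the paper exhibits (with your $\lambda$ playing the role of the paper's $c$, where $b_2a_3 = ca_1$); the paper simply displays the final identity $a_2\,T_1T_3\beta - b_1\,T_2T_3\alpha + a_3\,T_1T_2\beta - 2c\,T_1^2\alpha = (f_1,0,0)^T$ without the ``bootstrap'' derivation you use to arrive at it.
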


\begin{proof}
	Suppose that $b_2 a_3 = c a_1$, and recall that $f_1 = a_2b_3 - a_3 b_2$. Then
	$$\begin{bmatrix} a_2b_3 - a_3b_2 \\ 0 \\ 0 \end{bmatrix} = a_2 \, \myvector{T_1T_3}{b_3 \\ 0 \\ b_1} - b_1 \, \myvector{T_2 T_3}{0 \\ a_3 \\ a_2} + a_3 \, \myvector{T_1T_2}{b_2 \\ b_1 \\ 0} - 2 c \, \myvector{T_1^2}{ a_1 \\ 0 \\ 0}.\qedhere$$
\end{proof}

\begin{remark}
	To see that Theorem \ref{thm space monomial curves} follows from Theorem \ref{sufficient condition for 23}, note that by \cite{Herzog1970},
$$P(a,b,c) = I_2 \begin{pmatrix} x^{\alpha_3} & y^{\beta_1} & z^{\gamma_2} \\ z^{\gamma_1} & x^{\alpha_2} & y^{\beta_3} \end{pmatrix}$$
for some $\alpha_i, \beta_i, \gamma_i$. If $\alpha_3 \leqslant \alpha_2$, this matrix satisfies $a_1 | b_2$; otherwise, row and column operations give
$$P(a,b,c) = I_2 \begin{pmatrix} x^{\alpha_3} & y^{\beta_1} & z^{\gamma_2} \\ z^{\gamma_1} & x^{\alpha_2} & y^{\beta_3} \end{pmatrix} = I_2 \begin{pmatrix}  x^{\alpha_2} & z^{\gamma_1} & y^{\beta_3} \\ y^{\beta_1} & x^{\alpha_3} &z^{\gamma_2} \end{pmatrix},$$
which is of the desired form.
\end{remark}

Similarly, we can show the next instance of Harbourne's Conjecture holds for space monomial curves.

\begin{theorem}\label{sufficient condition for 35}
	Suppose that $\ch k \neq 2$. If $a_1 | b_2$ and $a_2 | b_3$, then $I^{(5)} \subseteq I^3$. In particular, $P(a,b,c)^{(5)} \subseteq P(a,b,c)^3$ for all $a, b, c$.\end{theorem}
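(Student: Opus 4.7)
The plan is to derive the first statement by applying Theorem~\ref{matrix main result} with $n=5$ and $m=3$, and then to deduce the statement about $P(a,b,c)$ from it.

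By Theorem~\ref{matrix main result}, the containment $I^{(5)} \subseteq I^3$ holds if and only if, for each of the three basis elements $T_1$, $T_2$, $T_3$ (indexing triples $(i,j,k)$ of degree $m-2=1$), one can choose $(u,v,w)$ with $u+v+w = n-m = 2$ such that the single-term vector $f_1^u f_2^v f_3^w T_1^{i+u} T_2^{j+v} T_3^{k+w}$ lies in the image of $H_5$. I would exhibit three such combinations explicitly, following the strategy of the proof of Theorem~\ref{sufficient condition for 23}. Starting with the $T_1$-column of $V_{5,3}$, I would aim for $f_1^2 T_1^3$ (the choice $(u,v,w)=(2,0,0)$), using the assumption $a_1 \mid b_2$, say $b_2 = s a_1$, and the identity $f_1 = a_2 b_3 - a_3 b_2 = t a_2^2 - s a_1 a_3$ (where $b_3 = t a_2$) to set up a telescoping combination of the columns indexed by $T_1^4\alpha, T_1^3T_2\alpha, T_1^3T_3\alpha$ and their $\beta$-counterparts so that all rows other than $T_1^3$ cancel. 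The hypothesis $a_2 \mid b_3$ would play the analogous role for one of the other target columns, and the last column should be handled by a small variant. The restriction $\ch k \neq 2$ is expected to enter through the inversion of a factor of $2$ in some coefficient, exactly as the factor $2c$ appears in the proof of Theorem~\ref{sufficient condition for 23}.

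For the particular statement about space monomial curves, I would invoke Herzog's description~\cite{Herzog1970} of $P(a,b,c) = I_2(M)$ with
$$M = \begin{pmatrix} x^{\alpha_3} & y^{\beta_1} & z^{\gamma_2} \\ z^{\gamma_1} & x^{\alpha_2} & y^{\beta_3} \end{pmatrix}.$$
Each of the three pairs $(x^{\alpha_3}, x^{\alpha_2})$, $(y^{\beta_1}, y^{\beta_3})$, $(z^{\gamma_2}, z^{\gamma_1})$ consists of pure powers of a single variable, so in each pair one entry divides the other. Cyclic permutations of the columns of $M$ cycle these three pairs through the "first" and "second" slots (positions $(1,1)$--$(2,2)$ and $(1,2)$--$(2,3)$), while a row swap simultaneously reverses the direction of divisibility in every pair. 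A short case analysis on the three divisibility directions shows that some combination of cyclic column permutations and a row swap always yields a presentation of $P(a,b,c)$ in which both $a_1 \mid b_2$ and $a_2 \mid b_3$ hold, regardless of the exponents. Since $I_2(M)$ is invariant under these operations, the first part of the theorem then gives $P(a,b,c)^{(5)} \subseteq P(a,b,c)^3$.

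The main obstacle is the explicit production of the column combinations in the first step: unlike Theorem~\ref{sufficient condition for 23}, which required a single four-term combination among twelve columns, here three distinct target vectors must be realized as combinations among thirty columns of $H_5$, and the simplest choice of $(u,v,w)$ might not always be achievable with only the two divisibility hypotheses---some trial-and-error over the six available monomials $f_1^u f_2^v f_3^w$ with $u+v+w=2$ may be needed for each column of $V_{5,3}$. A secondary obstacle is verifying the combinatorial claim about Herzog's matrix underlying the "in particular" statement.
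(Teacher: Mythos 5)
Your plan follows the paper's proof closely: apply Theorem~\ref{matrix main result} with $n=5$, $m=3$, realize the single-entry targets $f_1^2 T_1^3$, $f_2^2 T_2^3$, $f_3^2 T_3^3$ in $\im H_5$ using the hypotheses $a_1 \mid b_2$ and $a_2 \mid b_3$, and deduce the space monomial curve case from the Herzog matrix symmetries. However, there is a gap you cannot skip: Theorem~\ref{matrix main result} is only available when $\ch k \neq 3$ (the operator $D$ lifts $3\iota$, not $\iota$), whereas the theorem you are proving excludes only characteristic $2$. Your outline says nothing about $\ch k = 3$, so as written it proves the statement only for $\ch k \neq 2,3$. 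The paper handles characteristic $3$ at the start by a completely different argument: the Hochster--Huneke pigeonhole containment $I^{(hq-h+1)} \subseteq I^{[q]}$ in characteristic $p$, specialized to $p = q = 3$ and $h = 2$, gives $I^{(5)} \subseteq I^{[3]} \subseteq I^3$ outright, with no matrix computation at all. You need to dispose of that case before the linear-algebra machinery is even applicable.

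A secondary remark on what you flag as the main obstacle: your expectation that $\ch k \neq 2$ enters exactly as the single factor $2c$ did in Theorem~\ref{sufficient condition for 23} is too optimistic. In the paper the targets actually hit are $f_1^2 T_1^3$, $3 f_2^2 T_2^3$, and $12 f_3^2 T_3^3$, so the coefficient to invert is $12$ rather than $2$ (hence both $2$ and $3$ must be units, the latter already arranged), and the $f_3^2$ identity alone uses around twenty of the thirty columns of $H_5$, with coefficients quadratic in the $a_i$, $b_i$, $c$, $d$. The paper explicitly credits Macaulay2 for finding these combinations. Your telescoping strategy is the right idea, but ``some trial-and-error'' understates it considerably; plan on machine assistance. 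The ``in particular'' reduction via cyclic column permutations and a row swap is essentially the paper's own argument and is fine.
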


\begin{proof}
	When $\ch k = 3$, $I^{(5)} \subseteq I^3$ holds for ideals of big height $2$ by \cite{comparison}, so we may assume that $\ch k \neq 3$. Write $b_2 = c a_1$ and $b_3 = d a_2$. 
	Then 
	$$f_2^2 = (a_3b_1 - a_1 b_3)^2 = (a_3b_1 - a_1 a_2 d)^2 = a_3^2b_1^2 - 2 a_1 a_2 a_3 b_1 d + a_1^2 a_2^2 d^2, \textrm{ and}$$
\begin{align*}
\begin{bmatrix} 0 \\ 0 \\ 0 \\ 0 \\ 0 \\ 0 \\ 3 f_2^2 \\ 0 \\ 0 \\ 0 \end{bmatrix} = - a_3^2 b_1 c \, \myvector{T_1^2T_2^2}{0 \\ a_2 \\ 0 \\ a_1 \\ 0 \\ 0 \\ 0 \\ 0 \\ 0 \\ 0} + a_2a_3b_1c \, \myvector{T_1^2 T_2 T_3}{0 \\ a_3 \\ a_2 \\ 0 \\ a_1 \\ 0 \\ 0 \\ 0 \\ 0 \\ 0} - a_2^2b_1c \,\myvector{T_1^2 T_3}{0 \\ 0 \\ a_3 \\ 0 \\ 0 \\ a_1 \\ 0 \\ 0 \\ 0 \\ 0} + 2a_2 a_3 b_1 d \, \myvector{T_1 T_2^2}{0 \\ 0 \\ 0 \\ a_2 \\ 0 \\ 0 \\ a_1 \\ 0 \\ 0 \\ 0}
\\
- a_2^2 b_1 d \,\myvector{T_1 T_2^2 T_3}{0 \\ 0 \\ 0 \\ a_3 \\ a_2 \\ 0 \\ 0 \\ a_1 \\ 0 \\ 0} + \begin{pmatrix} 3a_1^2 a_2 d^2 \\ - 9 a_1 a_3 b_1 d \end{pmatrix} \, \myvector{T_2^3}{0 \\ 0 \\ 0 \\ 0 \\ 0 \\ 0 \\ a_2 \\ 0 \\ 0 \\ 0} + \begin{pmatrix} a_1 a_2 b_1 d \\ + 2 a_3 b_1^2 \end{pmatrix} \myvector{T_2^3T_3}{0 \\ 0 \\ 0 \\ 0 \\ 0 \\ 0 \\ a_3 \\ a_2 \\ 0 \\ 0} - a_2 b_1^2 \,\myvector{T_2^2T_3^2}{0 \\ 0 \\ 0 \\ 0 \\ 0 \\ 0 \\ 0 \\ a_3 \\ a_2 \\ 0}
\\
+ a_3^2 b_1 \, \myvector{T_1T_2^3}{0 \\ 0 \\ 0 \\ a_1c \\ 0 \\ 0 \\ b_1 \\ 0 \\ 0 \\ 0} - a_2 a_3 b_1 \, \myvector{T_1 T_2^2 T_3}{0 \\ 0 \\ 0 \\ a_2d \\ a_1c \\ 0 \\ 0 \\ b_1 \\ 0 \\ 0} + a_2^2 b_1 \,\myvector{T_2^2T_3^2}{0 \\ 0 \\ 0 \\ 0 \\ a_2d \\ a_1c \\ 0 \\ 0 \\ b_1 \\ 0}
\end{align*}
	Similarly,
$$f_3^2 = \left( a_1 b_2 - a_2 b_1 \right)^2 = a_1^2b_2^2 - 2 a_1 a_2 b_1 b_2 + a_2^2 b_1^2 = a_1^4 c^2 - 2 a_1^2 a_2 b_1 c + a_2^2 b_1^2,$$~and

\begin{align*}
\begin{bmatrix} 0 \\ 0 \\ 0 \\ 0 \\ 0 \\ 0 \\ 0 \\ 0 \\ 0 \\ 12 f_3^2 \end{bmatrix} = 
18 {a}_{2} {a}_{3}^{2} c d \,\myvector{T_1^3 T_2}{a_2 \\ a_1 \\ 0 \\ 0 \\ 0 \\ 0 \\ 0 \\ 0 \\ 0 \\ 0} + 9 {a}_{2}^{2} {a}_{3} c d \,\myvector{T_1^3 T_3}{a_3 \\ 0 \\ a_1 \\ 0 \\ 0 \\ 0 \\ 0 \\ 0 \\ 0 \\ 0} + \begin{pmatrix} - 8 {a}_{1} {a}_{3}^{2} c d \\ 6 {a}_{2}^{2} {a}_{3} d^{2} \end{pmatrix} \,\myvector{T_1^2 T_2^2}{0 \\ a_2 \\ 0 \\ a_1 \\ 0 \\ 0 \\ 0 \\ 0 \\ 0 \\ 0} 
+ \begin{pmatrix} 8 {a}_{1} {a}_{2} {a}_{3} c d \\ + 3 {a}_{2}^{3} d^{2} \end{pmatrix} \,\myvector{T_1^2 T_2 T_3}{0 \\ a_3 \\ a_2 \\ 0 \\ a_1 \\ 0 \\ 0 \\ 0 \\ 0 \\ 0}
\\
 + \begin{pmatrix} -8 {a}_{1} {a}_{2}^{2} c d \\ + 27 {a}_{2} {a}_{3} {b}_{1} c \end{pmatrix} \,\myvector{T_1^2 T_3^2}{0 \\ 0 \\ a_3 \\ 0 \\ 0 \\ a_1 \\ 0 \\ 0 \\ 0 \\ 0} + 13 {a}_{1} {a}_{2} {a}_{3} d^{2} \,\myvector{T_1 T_2^3}{0 \\ 0 \\ 0 \\ a_2 \\ 0 \\ 0 \\ a_1 \\ 0 \\ 0 \\ 0} + \begin{pmatrix} -11 {a}_{1} {a}_{2}^{2} d^{2} \\ +18 {a}_{2} {a}_{3} {b}_{1} d \end{pmatrix} \,\myvector{T_1 T_2^2 T_3}{0 \\ 0 \\ 0 \\ a_3 \\ a_2 \\ 0 \\ 0 \\ a_1 \\ 0 \\ 0} + 18 {a}_{2}^{2} {b}_{1} d \,\myvector{T_1 T_2 T_3^2}{0 \\ 0 \\ 0 \\ 0 \\ a_3 \\ a_2 \\ 0 \\ 0 \\ a_1 \\ 0}
\\
-36 {a}_{1}^{2} {a}_{3} d^{2} \,\myvector{T_2^4}{0 \\ 0 \\ 0 \\ 0 \\ 0 \\ 0 \\ a_2 \\ 0 \\ 0 \\ 0} + \begin{pmatrix} 11 {a}_{1}^{2} {a}_{2} d^{2} \\ -8 {a}_{1} {a}_{3} {b}_{1} d \end{pmatrix} \,\myvector{T_2^3 T_3}{0 \\ 0 \\ 0 \\ 0 \\ 0 \\ 0 \\ a_3 \\ a_2 \\ 0 \\ 0}
+ \begin{pmatrix} -12 {a}_{1}^{3} c d \\ -2 {a}_{1} {a}_{2} {b}_{1} d
 \end{pmatrix} \,\myvector{T_2^2 T_3^2}{0 \\ 0 \\ 0 \\ 0 \\ 0 \\ 0 \\ 0 \\ a_3 \\ a_2 \\ 0} + 27 {a}_{2} {b}_{1}^{2} \,\myvector{T_2^2 T_3^2}{0 \\ 0 \\ 0 \\ 0 \\ 0 \\ 0 \\ 0 \\ 0 \\ a_3 \\ a_2} -27 {a}_{2} {a}_{3}^{2} c  \,\myvector{T_1^3 T_3}{a_2d \\ 0 \\ b_1 \\ 0 \\ 0 \\ 0 \\ 0 \\ 0 \\ 0 \\ 0}
 \\
 -18 {a}_{2} {a}_{3}^{2} d \,\myvector{T_1^2 T_2^2}{0 \\ a_1c \\ 0 \\ b_1 \\ 0 \\ 0 \\ 0 \\ 0 \\ 0 \\ 0}
-9 {a}_{2}^{2} {a}_{3} d \,\myvector{T_1^2 T_2 T_3}{0 \\ a_2d \\ a_1c \\ 0 \\ b_1 \\ 0 \\ 0 \\ 0 \\ 0 \\ 0} -3 {a}_{2}^{3} d \,\myvector{T_1^2 T_3^2}{0 \\ 0 \\ a_2d \\ 0 \\ 0 \\ b_1 \\ 0 \\ 0 \\ 0 \\ 0} + 8 {a}_{1} {a}_{3}^{2} d \,\myvector{T_1 T_2^3}{0 \\ 0 \\ 0 \\ a_1c \\ 0 \\ 0 \\ b_1 \\ 0 \\ 0 \\ 0} -8 {a}_{1} {a}_{2} {a}_{3} d \,\myvector{T_1 T_2^2 T_3}{0 \\ 0 \\ 0 \\ a_2d \\ a_1c \\ 0 \\ 0 \\ b_1 \\ 0 \\ 0}
 \end{align*}
\[
+\begin{pmatrix} 8 {a}_{1} {a}_{2}^{2} d \\ -27 {a}_{2} {a}_{3} {b}_{1}
 \end{pmatrix} \,\myvector{T_1 T_2 T_3^2}{0 \\ 0 \\ 0 \\ 0 \\ a_2d \\ a_1c \\ 0 \\ 0 \\ b_1 \\ 0} -15 {a}_{2}^{2} {b}_{1} \,\myvector{T_1 T_3^3}{0 \\ 0 \\ 0 \\ 0 \\ 0 \\ a_2d \\ 0 \\ 0 \\ 0 \\ b_1} + 12 {a}_{1}^{2} {a}_{3} d \,\myvector{T_2^3 T_3}{0 \\ 0 \\ 0 \\ 0 \\ 0 \\ 0 \\ a_2d \\ a_1c \\ 0 \\ 0} + \begin{pmatrix} 12 {a}_{1}^{3} c \\ -24 {a}_{1} {a}_{2} {b}_{1}
 \end{pmatrix} \,\myvector{T_2 T_3^3}{0 \\ 0 \\ 0 \\ 0 \\ 0 \\ 0 \\ 0 \\ 0 \\ a_2d \\ a_1c}.
 \]
 Finally,
 $$f_1^2 = \left(a_2b_3 - a_3b_2\right)^2 = \left( a_2^2d - a_1a_3c \right)^2 = a_2^4 d^2 - 2 a_1 a_2^2 a_3 cd + a_1^2 a_3^2 c^2,$$
	and
	$$\begin{bmatrix} f_1^2 \\ 0 \\ 0 \\ 0 \\ 0 \\ 0 \\ 0 \\ 0 \\ 0 \\ 0 \end{bmatrix} =
	\begin{pmatrix} a_1a_3^2c^2 \\ -2a_2^2a_3cd \end{pmatrix}
	\myvector{T_1^4}{a_1 \\ 0 \\ 0 \\ 0 \\ 0 \\ 0 \\ 0 \\ 0 \\ 0 \\ 0}
	+ a_2^3d^2 \, \myvector{T_1^3 T_2}{a_2 \\ a_1 \\ 0 \\ 0 \\ 0 \\ 0 \\ 0 \\ 0 \\ 0 \\ 0 }
	- a_1a_2^2d^2 \, \myvector{T_1^2T_2^2}{0 \\ a_2 \\ 0 \\ a_1 \\ 0 \\ 0 \\ 0 \\ 0 \\ 0 \\ 0} 
	+ a_1^2a_2d^2 \, \myvector{T_1 T_2^3}{0 \\ 0 \\ 0 \\ a_2 \\ 0 \\ 0 \\ a_1 \\ 0 \\ 0 \\ 0}
	- a_1^3d^2 \, \myvector{T_2^4}{0 \\ 0 \\ 0 \\ 0 \\ 0 \\ 0 \\ a_2 \\ 0 \\ 0 \\ 0}.$$

Finally, to see that $P(a,b,c)$ satisfies these conditions for all $a, b, c$, note that
	$$P(a,b,c) = I_2 \begin{pmatrix} x^{\alpha_3} & y^{\beta_1} & z^{\gamma_2} \\ z^{\gamma_1} & x^{\alpha_2} & y^{\beta_3} \end{pmatrix},$$
	and that up to permuting rows or columns, this matrix satisfies the required conditions. Indeed, note that up to switching the rows, two of the top row entries divide corresponding entries in the bottom row. By possibly switching the columns, these two entries can be made to be $a_1$ and $a_2$. Then either $a_1 | b_2$ or $a_2 | b_1$; if it is the second option, then switch the first two columns to get $a_1 | b_2$.
\end{proof}

The downside of this method is that we can only study each containment $I^{(a)} \subseteq I^b$ separately. On the other hand, if a containment such as $I^{(4)} \subseteq I^3$ holds, Theorem \ref{thm stable} can then be applied to yield containments for large $b$.
In fact, there are classes of space monomial curves that satisfy $P^{(4)} \subseteq P^3$. By Theorem \ref{thm stable}, such ideals satisfy $P^{(2n-2)} \subseteq P^n$ for all $n \geqslant 6$.

\begin{theorem}\label{thm sufficient 34}
	Suppose that $\ch k \neq 2$. If $a_1 | b_2 | a_1^2$, $a_2|b_3$, and either $a_3 | b_1$ or $b_1 | a_3$, then $I^{(4)} \subseteq I^3$.
	In particular, if
	$$P = P(a,b,c) = I_2 \begin{pmatrix} x^{\alpha_3} & y^{\beta_1} & z^{\gamma_2} \\ z^{\gamma_1} & x^{\alpha_2} & y^{\beta_3} \end{pmatrix}$$	
	is such that 
	$$\alpha_3 \leqslant \alpha_2 \leqslant 2 \alpha_3 \textrm{ and } \beta_1 \leqslant \beta_3,$$
	then $P^{(4)} \subseteq P^3$.
\end{theorem}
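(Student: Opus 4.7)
The plan is to apply Theorem \ref{matrix main result} with $n = 4$ and $m = 3$. Since $n - m = 1$ and $m - 2 = 1$, for each of the three triples $(i,j,k)$ with $i+j+k = 1$ I may select a single $(u,v,w)$ with $u+v+w = 1$ and only need to check that the corresponding monomial $f_1^u f_2^v f_3^w T_1^{i+u} T_2^{j+v} T_3^{k+w}$ lies in the ideal generated by the column expressions listed there. Equivalently, I must exhibit three explicit $R$-linear combinations of columns of $H_4 = \varphi(4)^T$, one for each $(i,j,k)$, producing the desired element in the targeted row with zeros in all other rows.

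Using the hypotheses, write $b_2 = c a_1$ and pick $e \in R$ with $a_1^2 = e b_2$ (so $a_1 = ec$), write $b_3 = d a_2$, and write either $b_1 = g a_3$ or $a_3 = g b_1$ according to which divisibility holds. Then
\begin{align*}
f_1 &= a_2^2 d - a_1 a_3 c, \\
f_2 &= a_3 b_1 - a_1 a_2 d, \\
f_3 &= a_1^2 c - a_2 b_1,
\end{align*}
which puts each $f_i$ into the precise shape---products of the $a_j$'s and $b_k$'s---produced by combinations of columns of $H_4$. I would then write down the three target combinations, one at a time, in the style of the displays appearing in the proof of Theorem \ref{sufficient condition for 35}. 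The divisibility $b_2 \mid a_1^2$ is exactly what allows me to trade an $a_1^2$ contribution for $e b_2$ (or vice versa), which is the trade needed to cancel the $f_3$-target; the divisibility between $a_3$ and $b_1$ plays the analogous role in the $f_2$-calculation, and $a_2 \mid b_3$ does so for $f_1$. Each target vector has only $\binom{4}{2} = 6$ entries while there are $2\binom{5}{2} = 20$ columns available, so the check is finite in each case.

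The main obstacle is the explicit bookkeeping: for each target one must choose the columns and coefficients so that every entry outside the target row cancels exactly, which becomes more intricate than in Theorems \ref{sufficient condition for 23} and \ref{sufficient condition for 35} because $n$ and $m$ are larger and the extra condition $b_2 \mid a_1^2$ must be used. Once the three combinations are displayed, Theorem \ref{matrix main result} gives $I^{(4)} \subseteq I^3$ when $\ch k \neq 2,3$; the case $\ch k = 3$ is handled by the alternative lift of Remark \ref{remark char 3}, where the coefficient $n!/m! = 4$ that appears is invertible exactly when $\ch k \neq 2$. For the ``in particular'' clause, write $P(a,b,c) = I_2(M)$ with $M = \begin{pmatrix} x^{\alpha_3} & y^{\beta_1} & z^{\gamma_2} \\ z^{\gamma_1} & x^{\alpha_2} & y^{\beta_3} \end{pmatrix}$ as in \cite{Herzog1970}: the assumption $\alpha_3 \leqslant \alpha_2 \leqslant 2\alpha_3$ yields $a_1 \mid b_2 \mid a_1^2$, the assumption $\beta_1 \leqslant \beta_3$ yields $a_2 \mid b_3$, and the third divisibility hypothesis is automatic since $a_3 = z^{\gamma_2}$ and $b_1 = z^{\gamma_1}$ are powers of the single variable $z$.
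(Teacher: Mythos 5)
Your overall plan coincides with the paper's: apply Theorem \ref{matrix main result} with $n=4$, $m=3$, handle characteristic $3$ via Remark \ref{remark char 3} (where the relevant coefficient $4!/3!=4$ is a unit when $\ch k\neq 2$), and derive the ``in particular'' clause from the observation that $\alpha_3\leq\alpha_2\leq 2\alpha_3$ gives $a_1\mid b_2\mid a_1^2$, that $\beta_1\leq\beta_3$ gives $a_2\mid b_3$, and that $a_3=z^{\gamma_2}$ and $b_1=z^{\gamma_1}$ automatically satisfy one divisibility or the other. Your rewritings $f_1=a_2^2d-a_1a_3c$, $f_2=a_3b_1-a_1a_2d$, $f_3=a_1^2c-a_2b_1$ are correct and are the same substitutions the paper uses.

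However, the proof is not complete: exhibiting the three explicit $R$-linear combinations of the columns of $H_4$ (one for each target) \emph{is} the content of this theorem, and you only assert that it can be done rather than doing it. For a result of this type there is no softer argument---each column introduces nonzero entries in up to three rows, and one must actually verify that a choice of coefficients exists making all off-target rows vanish. The paper spends essentially the entire proof on these three displays. Moreover, your proposed allocation of which hypothesis serves which target does not match what actually happens when the bookkeeping is done. In the paper's combinations, the condition $b_2\mid a_1^2$ is used to cancel the $T_2^2$-row in the $f_1$-target (via a coefficient $\frac{a_1^2}{b_2}\frac{b_3}{a_2}$ on a $\beta$-column tagged $T_2^3$), not the $f_3$-target as you say. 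The $f_2$-target is handled using only $a_1\mid b_2$ and $a_2\mid b_3$; no divisibility between $a_3$ and $b_1$ is needed there. The condition $b_1\mid a_3$ or $a_3\mid b_1$ is used exclusively for the $f_3$-target, which is why the proof splits into two cases for that single target. Since the attribution you gave is what would guide you in choosing the column combinations, you would likely hit an obstruction following your stated plan; you would need to re-derive the correct allocation while carrying out the cancellation, which is precisely the work that is missing.
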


\begin{proof}
Given Remark \ref{remark char 3}, we do not need to exclude characteristic $3$, since $\frac{4!}{3!}$ is not divisible by $3$. Since $a_1 | b_2$, $a_2 | b_3$ and $b_2 | a_1^2$,
\begin{align*}
\begin{bmatrix} f_1 \\ 0 \\ 0 \\ 0 \\ 0 \\ 0 \end{bmatrix} = 
-a_3 \, \frac{b_2}{a_1} \, \myvector{T_1^3}{a_1 \\ 0 \\ 0 \\ 0 \\ 0 \\ 0}
+b_3 \, \myvector{T_1^2T_2}{a_2 \\ a_1 \\ 0 \\ 0 \\ 0 \\ 0} - a_1 \frac{b_3}{a_2} \,\myvector{T_1T_2^2}{0 \\ a_2 \\ 0 \\ a_1 \\ 0 \\ 0} + \frac{a_1^2}{b_2} \frac{b_3}{a_2} \, \myvector{\, T_2^3 \,}{0 \\ 0 \\ 0 \\ b_2 \\ 0 \\ 0} .	
\end{align*}
Similarly, since $a_1 | b_2$ and $a_2 | b_3$,
$$\begin{bmatrix} 0 \\ 0 \\ 0 \\ 2f_2 \\ 0 \\ 0 \end{bmatrix} = 
-a_3 \, \frac{b_2}{a_1} \, \myvector{T_1^2T_2}{a_2 \\ a_1 \\ 0 \\ 0 \\ 0 \\ 0} 
+ a_2 \, \frac{b_2}{a_1} \,\myvector{T_1^2 T_3}{a_3 \\ 0 \\ a_1 \\ 0 \\ 0 \\ 0} 
+ b_3 \,\myvector{\, T_1T_2^2 \,}{0 \\ a_2 \\ 0 \\ a_1 \\ 0 \\ 0} 
- 3 a_1 \frac{b_3}{a_2} \, \myvector{\, T_2^2 \,}{0 \\ 0 \\ 0 \\ a_2 \\ 0 \\ 0}
+ b_1 \, \myvector{\, T_2^2 T_3 \,}{0 \\ 0 \\ 0 \\ a_3 \\ a_2 \\ 0}
+ a_3 \, \myvector{T_1T_2^2}{0 \\ b_2 \\ 0 \\ b_1 \\ 0 \\ 0}
- a_2 \, \myvector{T_1T_2T_3}{0 \\ b_3 \\ b_2 \\ 0 \\ b_1 \\ 0}.$$
Finally, if $b_1 | a_3$, then the following identity holds:
\begin{align*}
\begin{bmatrix} 0 \\ 0 \\ 0 \\ 0 \\ 0 \\ f_3 \end{bmatrix} = 
b_3 \, \frac{b_2}{a_1} \frac{a_3}{b_1} \,\myvector{T_1^3}{a_1 \\ 0 \\ 0 \\ 0 \\ 0 \\ 0}
-2 a_3 \, \frac{b_2}{a_1} \,\myvector{T_1^2T_3}{a_3 \\ 0 \\ a_1 \\ 0 \\ 0 \\ 0}
+ b_2 \,\myvector{T_1T_3^2}{0 \\ 0 \\ a_3 \\ 0 \\ 0 \\ a_1}
- b_1 \,\myvector{T_2T_3^2}{0 \\ 0 \\ 0 \\ 0 \\ a_3 \\ a_2}
\\
+ 2 \, \frac{b_2}{a_1} \, \frac{a_3^2}{b_1} \,\myvector{T_1^3}{b_1 \\ 0 \\ 0 \\ 0 \\ 0 \\ 0}
- b_3 \, \frac{a_3}{b_1} \,\myvector{T_1^2T_2}{b_2 \\ b_1 \\ 0 \\ 0 \\ 0 \\ 0}
+ a_3 \,\myvector{T_1T_2T_3}{0 \\ b_3 \\ b_2 \\ 0 \\ b_1 \\ 0} 
;
\end{align*}
and if $a_3 | b_1$, we also have
\begin{align*}
	\begin{bmatrix} 0 \\ 0 \\ 0 \\ 0 \\ 0 \\ 2f_3 \end{bmatrix} = 
a_3 \, \frac{b_3}{a_2} \,\myvector{T_1 T_2^2}{ 0 \\ a_2 \\ 0 \\ a_1 \\ 0 \\ 0}
- b_3 \,\myvector{T_1 T_2 T_3}{ 0 \\ a_3 \\ a_2 \\ 0 \\ a_1 \\ 0} 
- a_1 \, \frac{b_3}{a_2} \,\myvector{T_2^2 T_3}{ 0 \\ 0 \\ 0 \\ a_3 \\ a_2 \\ 0} 
- 3 a_2 \, \frac{b_1}{a_3} \,\myvector{T_3^3}{ 0 \\ 0 \\ 0 \\ 0 \\ 0 \\ a_3} 
+ a_2 \,\myvector{T_1^2 T_3}{ 0 \\ 0 \\ b_3 \\ 0 \\ 0 \\ b_1}   
+ 2 a_1 \,\myvector{T_2 T_3^2}{ 0 \\ 0 \\ 0 \\ 0 \\ b_3 \\ b_2}.
\end{align*}\qedhere
\end{proof}

\begin{theorem}\label{theorem 43 for a=3 and 4}
	Let $\ch k \neq 2$, $P = P(a,b,c)$. If $a=3$ or $4$ and $a<b<c$, then~$P^{(4)} \subseteq P^3$.
\end{theorem}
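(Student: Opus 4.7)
The plan is to invoke Theorem~\ref{thm sufficient 34}. Writing $P(a,b,c) = I_2(M)$ via Herzog's presentation, the sufficient conditions there translate to $\alpha_3 \leq \alpha_2 \leq 2\alpha_3$ and $\beta_1 \leq \beta_3$ (the remaining condition $a_3 \mid b_1$ or $b_1 \mid a_3$ is automatic, since $a_3$ and $b_1$ are pure powers of $z$). Crucially, the ideal $P(a,b,c)$ is preserved, up to the ring automorphism permuting $x, y, z$, by any permutation of the triple $(a,b,c)$; hence $P(a,b,c)^{(4)} \subseteq P(a,b,c)^3$ is equivalent to $P(a',b',c')^{(4)} \subseteq P(a',b',c')^3$ for any permutation $(a',b',c')$ of $(a,b,c)$. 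Combined with the alternate matrix form from the remark after Theorem~\ref{sufficient condition for 23} (which, when applicable, replaces the required conditions by $\alpha_2 \leq \alpha_3 \leq 2\alpha_2$ and $\gamma_1 \leq \gamma_2$), this gives enough flexibility to check the hypotheses of Theorem~\ref{thm sufficient 34} in at least one presentation.

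The approach is then a finite case analysis. For each fixed $a \in \{3, 4\}$ and each residue class pair of $(b \bmod a, c \bmod a)$ consistent with $a < b < c$ and $\gcd(a,b,c) = 1$, I would compute Herzog's exponents explicitly via the minimal decompositions $a(\alpha_2+\alpha_3) = b\beta_1 + c\gamma_1$, $b(\beta_1+\beta_3) = a\alpha_2 + c\gamma_2$, and $c(\gamma_1+\gamma_2) = a\alpha_3 + b\beta_3$; each of the sums is the least positive integer $n$ for which the corresponding $n$-multiple lies in the semigroup spanned by the other two generators. Because $a \leq 4$, these minimal decompositions are tightly controlled by the residue data. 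For example, $P(3,5,7)$ in its natural presentation produces $\begin{pmatrix} x^3 & y & z \\ z & x & y \end{pmatrix}$, where both versions of the condition fail, but relabeling to the triple $(5,3,7)$ gives $\begin{pmatrix} x & y & z \\ z & x & y^3 \end{pmatrix}$, which satisfies $\alpha_3 \leq \alpha_2 \leq 2\alpha_3$ and $\beta_1 \leq \beta_3$ with $(\alpha_2,\alpha_3,\beta_1,\beta_3) = (1,1,1,3)$, so Theorem~\ref{thm sufficient 34} applies. The claim is that an analogous relabeling works in every case.

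The main obstacle is the combinatorial bookkeeping: there are several residue configurations for $a=3$ and more for $a=4$, and in each one must locate a permutation of $(a,b,c)$ whose Herzog matrix clears the inequalities (possibly after the matrix swap). One also has to set aside the complete-intersection cases (for instance $P(3,4,7)$, where $c = a+b$), which are handled separately since $P$ then has only two minimal generators and the containment can be verified directly. The remaining non-complete-intersection cases reduce to explicit semigroup computations, which are routine once the residue classes are fixed; the restriction $a \leq 4$ is precisely what keeps the number of cases manageable and prevents the minimal decomposition $a\alpha_1 = b\beta_1 + c\gamma_1$ from being pathologically skewed in every permutation.
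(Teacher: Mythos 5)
Your reduction to Theorem~\ref{thm sufficient 34} is the right move, and your observation that the containment is invariant under permuting $(a,b,c)$ is correct, but the mechanism you propose — enumerating residue classes of $(b \bmod a, c \bmod a)$, recomputing Herzog's exponents for various permutations of the triple, and hoping to land in a presentation that clears the inequalities — is genuinely different from the paper's argument, and as written it is a sketch rather than a proof. You assert that ``an analogous relabeling works in every case'' without exhibiting the case list; this is the entire content of the theorem. Moreover, the claim that the analysis is \emph{finite} in terms of residues is unsupported: Herzog's exponents $\alpha_i, \beta_i, \gamma_i$ depend on $b$ and $c$ themselves, not merely on their residues mod $a$, so one would still need to argue that the relevant inequalities (such as $\alpha_3 \leq \alpha_2 \leq 2\alpha_3$) follow from residue data alone, which you do not do.

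The paper instead exploits a structural classification that you miss. For $a=4$, Huneke's computation in \cite{HunekeHilbertSymb} shows $P(4,b,c)$ (when not a complete intersection) is generated by the maximal minors of $\left(\begin{smallmatrix} y & z & x^p \\ x^q & y^2 & z \end{smallmatrix}\right)$; for $a=3$, the classification of colength-$3$ ideals via \cite{Poonen} forces the matrix to be $\left(\begin{smallmatrix} x^{\alpha_3} & y & z \\ z & x^{\alpha_2} & y \end{smallmatrix}\right)$. In both cases the $y$- and $z$-exponents are so rigid ($1,2,1,1$ for $a=4$; all equal to $1$ for $a=3$) that the three divisibility conditions of Theorem~\ref{thm sufficient 34} hold on the nose, with no inequality on $\alpha_2, \alpha_3$ to check: one places the sole problematic pair (the $x$-powers) in the third slot, where only one-sided divisibility is required, via a cyclic column permutation, which of course preserves $I_2(M)$. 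Your own worked example $(3,5,7)$ illustrates the point you overlooked — you found that both the natural presentation $\left(\begin{smallmatrix} x^3 & y & z \\ z & x & y \end{smallmatrix}\right)$ and its swap fail the $\alpha$-inequality, and resorted to relabeling the triple, but the cyclic column shift $\left(\begin{smallmatrix} y & z & x^3 \\ x & y & z \end{smallmatrix}\right)$ already satisfies all three hypotheses of Theorem~\ref{thm sufficient 34} directly. So even the one case you verify is done the hard way. To salvage your approach you would need to actually execute the residue analysis and prove that some permutation always succeeds; as it stands the proposal identifies the right target theorem but leaves the essential verification undone.
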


\begin{proof}
	If $P$ is a complete intersection, then $P^{(4)} = P^4 \subseteq P^3$, so we can assume that $P$ is not a complete intersection. In the proof of \cite[Theorem 3.14]{HunekeHilbertSymb}, Huneke shows that if $a=4$ and $P$ is not a complete intersection, then $P$ is minimally generated by the maximal minors of
	$$\begin{pmatrix} y & z & x^p \\ x^q & y^2 & z \end{pmatrix},$$
	where $b = p + 2q$ and $c = 2q+3p$. By Theorem \ref{thm sufficient 34}, we have $P^{(4)} \subseteq P^3$.
	
	A technique similar to the one used in the proof of \cite[Theorem 3.14]{HunekeHilbertSymb} can be used to determine the form of $P$ for different values of $a$. First, we note that $(P,x)$ must be of the form $(P,x) = (y^{\beta_1}z^{\gamma_1}, y^{\beta_2}, z^{\gamma_3})$, and that $R/(P,x)$ has multiplicity $a$. Following the classification in \cite{Poonen}, we can then determine all possibilities for $(P,x)$ when $a=3$, and conclude that the only possibility is $(P,x) = (x^2, y^2, xy)$. In particular, $P = I_2(M)$, where
		$$M = \begin{pmatrix} x^{\alpha_3} & y & z \\ z & x^{\alpha_2} & y \end{pmatrix}.$$
		Now any matrix of this form satisfies the conditions in Theorem \ref{thm sufficient 34}, so $P^{(4)} \subseteq P^3$.
\end{proof}

The containment $I^{(4)} \subseteq I^3$ can hold even if the symbolic Rees algebra of $I$ is not noetherian.

\begin{example}\label{non-noetherian example}
	Let $k = \mathbb{C}$ and fix an integer $n \geqslant 4$ not divisible by $3$. By \cite{NonnoetherianSymb}, 
	$$P(7n-3, (5n-2)n, 8n-3) = I_2 \begin{pmatrix} y & x^n & z^{2n-1} \\ z^n & y^2 & x^{2n-1} \end{pmatrix},$$
	Also by \cite[Corollary 1.2]{NonnoetherianSymb}, the symbolic Rees algebra of $P$ is not noetherian. By Theorem \ref{thm sufficient 34}, $P^{(4)} \subseteq P^3$, and by Theorem \ref{thm stable}, $P^{(2n-2)} \subseteq P^n$ for all $n \geqslant 6$.
\end{example}

But $P^{(4)} \subseteq P^3$ does not hold for all space monomial curves. 

\begin{example}
	Consider
	$$P(t^9,t^{11},t^{14}) = I_2 \begin{pmatrix} z & y^3 & x^{3} \\ x & z^2 & y^{2} \end{pmatrix} \subseteq \mathbb{C}[x,y,z].$$
	Macaulay2 \cite{M2} computations show that $P^{(4)} \nsubseteq P^3$, and that in fact, this is the smallest such example, meaning that for any $a \leqslant 9$, $b \leqslant 11$, and $c \leqslant 14$ such that $(a,b,c) \neq (9,11,14)$, $P(a,b,c)^{(4)} \subseteq P(a,b,c)^3$. Note that no row or column operations can result in a matrix satisfying the conditions in Theorem \ref{thm sufficient 34}.
\end{example}

\section{Acknowledgements}

This work started as part of my PhD thesis, and could not have happened without the guidance and support of my advisor, Craig Huneke, who was also the one who first pointed \cite{Seceleanu} out to me. I am very grateful to Alexandra Seceleanu for allowing me to include here her Example \ref{Alexandra's Fermat example}, for her own work that inspired this paper, and for the many conversations we have had about this project. I also want to thank Alessandro De Stefani, Brian Harbourne, Jack Jeffries, and Karen Smith for helpful discussions. Thank you also to Jack Jeffries, Vivek Mukundan and Alexandra Seceleanu for their comments and suggestions on a preliminary draft of this paper, and to Elena Guardo for kindly pointing out a typo in the published version of the paper (I was so excited about reflection arrangements I upgraded them to reflection arrangements reflection arrangements). Finally, I am grateful to the anonymous referee's extremely detailed and helpful comments, especially for improving Remark \ref{remark any C} with the observation that the invariants $\rho''$ and $\rho_a'$ coincide.

Originally, the proofs of Theorem \ref{sufficient condition for 35} and Theorem \ref{thm sufficient 34} relied on excruciating computations I did by hand. It turns out the proofs could have been found with a computer! The proofs here presented are in fact improvements found with the assistance Macaulay2 \cite{M2}, which was also used to compute many examples during this project. This Macaulay2 work became a lot more efficient thanks to discussions with Craig Huneke and Vivek Mukundan as part of a parallel joint project.


\bibliographystyle{alpha}
\bibliography{References}
\end{document}